\numberwithin{equation}{section}
\theoremstyle{plain}
\newtheorem{Proposition}[equation]{Proposition}
\newtheorem{Corollary}[equation]{Corollary}
\newtheorem*{Corollary*}{Corollary}
\newtheorem{Theorem}[equation]{Theorem}
\newtheorem*{Theorem*}{Theorem}
\newtheorem{Lemma}[equation]{Lemma}
\theoremstyle{definition}
\newtheorem{Example}[equation]{Example}
\newtheorem{Remark}[equation]{Remark}
\newtheorem{Question}[equation]{Question}
\def\MM{\mathscr{M}}
\def\Mult{\mathfrak{M}}
\def\P{\mathscr{P}}
\def\A{\mathscr{A}}
\def\C{\mathbb{C}}
\def\D{\mathbb{D}}
\def\T{\mathbb{T}}
\def\phi{\varphi}
\newcommand{\beqa}{\begin{eqnarray*}}
\newcommand{\eeqa}{\end{eqnarray*}}
\renewcommand{\leq}{\leqslant}
\renewcommand{\subset}{\subseteq}
\title[Range spaces]{Multipliers between range spaces of co-analytic Toeplitz operators}
\author[Fricain]{Emmanuel Fricain}
 \address{Laboratoire Paul Painlev\'e, Universit\'e Lille 1, 59 655 Villeneuve d'Ascq C\'edex }
 \email{emmanuel.fricain@math.univ-lille1.fr}
\author[Hartmann]{Andreas Hartmann}
\address{Institut de Math\'ematiques de Bordeaux, Universit\'e Bordeaux 1, 351 cours de la Lib\'eration 33405 Talence C\'edex, France}
\email{Andreas.Hartmann@math.u-bordeaux1.fr}
\author[Ross]{William T. Ross}
	\address{Department of Mathematics and Computer Science, University of Richmond, Richmond, VA 23173, USA}
	\email{wross@richmond.edu}
\keywords{Hardy spaces, inner functions, model spaces, multipliers, de Branges Rovnyak spaces}
\subjclass[2010]{30J05, 30H10, 46E22}
\begin{document}

\begin{abstract}
In this paper we discuss the multipliers between range spaces of co-analytic Toeplitz operators. 
\end{abstract}

\maketitle

\section{Introduction}
This paper continues  the study of the multipliers between two sub-Hardy Hilbert spaces. By the term  ``multiplier'' between two Hilbert spaces $\mathscr{H}_{1}, \mathscr{H}_2$ of analytic functions on the open unit disk $\D = \{|z| < 1\}$ we mean the set 
$\{\phi \in \mathscr{O}(\D):  \forall f\in\mathscr{H}_1,\, \phi  f\in\mathscr{H}_2\}$,
where $\mathscr{O}(\D)$ denotes the analytic functions on $\D$. By `sub-Hardy Hilbert spaces' we mean the Hilbert spaces which can be contractively embedded into the Hardy space $H^2$ of the unit disk. Prominent examples of these types of spaces are the de Branges-Rovnyak spaces \cite{MR3617311, Sa}.

 The study of multipliers between two model spaces began with a paper of Crofoot \cite{Crofoot}  and continued in \cite{FHR-Mult-Model}. This recent work was continued further in \cite{Camara-Part} for two Toeplitz kernels. 
Since model spaces are special examples of de Branges-Rovnyak spaces  \cite{MR3617311, Sa}, it seems natural to expand this investigation to include the multipliers between two de Branges-Rovnyak spaces. The multipliers from a de Branges-Rovnyak space to {\em itself} were discussed in \cite{MR1098860, MR1254125, MR1614726}. In particular, B. Davis and J. McCarthy \cite{MR1098860} obtained a nice description, in terms of growth of Fourier coefficients,  of functions $f\in H^\infty$ (the bounded analytic functions on $\mathbb D$) which multiply every de Branges--Rovnyak space into itself.

This paper explores another type of sub-Hardy Hilbert space, closely connected to de Branges--Rovnyak spaces, namely the range space $\MM(\overline{a}) = T_{\overline{a}} H^2$ of the co-analytic Toeplitz operator $T_{\overline{a}}$ on $H^2$  with symbol $\overline{a}$ where $a\in H^\infty$. 

In \cite{MR1254125} B. Lotto and D. Sarason obtained a characterization of the multipliers of $\MM(\overline{a})$ into itself in terms of the boundedness of the product of two Hankel operators. This characterization is rather difficult to check and one of the aims of this paper is to give a complete functional characterization in some particular situations. It was also observed by Sarason in \cite{MR847333} that any function analytic in a neighborhood of $\overline{\D}$, the closure of $\D$, is a multiplier of every $\MM(\overline{a})$ into itself (see also \cite[Theorem 24.6]{MR3617311}). Since the constant functions belong to $\MM(\overline{a})$, we see that every function analytic in a neighborhood of $\overline{\D}$ (in particular the polynomials) belongs to every $\MM(\overline{a})$.  Note that in \cite{MR3617311, MR847333}, it is assumed that $a$ is an outer function in the closed unit ball of $H^\infty$ that is non extreme (meaning $\log(1-|a|)\in L^1(\mathbb T)$), but since for $a\in H^\infty$,  the function $a_1=a/\lambda$  ($\lambda=2\|a\|_{\infty}$) is non-extreme and  $\MM(\overline{a}) = \MM(\overline{a_1})$ (see Proposition \ref{ppppopOO}), the result of Sarason is true for every $a\in H^\infty$.  In particular, the function $\varphi(z)=z$ multiplies $\MM(\overline{a})$ into itself, which means that the shift operator acts boundedly on $\MM(\overline{a})$ for every $a\in H^\infty$.

To state our results, we set some basic terminology that will be discussed in more detail in the next section. In \cite{FHR-Ma} we described the range space $\MM(\overline{a})$ for {\em rational} $a \in H^{\infty}$. Here one can show (see Proposition \ref{93848bbczvcvgvgvgvgv} below) that if $\zeta_1, \ldots, \zeta_N$ are the zeros of $a$ on the unit circle $\T$, repeated according to their multiplicity, and the polynomial $\check{a}$ is defined by 
\begin{equation}\label{classA}
\check{a}(z) = \prod_{j = 1}^{N} (z - \zeta_j), \quad \zeta_j \in \T,
\end{equation}
 then 
$$\MM(\overline{a}) = \MM(\overline{\check{a}}).$$
In \cite[Cor.~6.21]{FHR-Ma} we identified $\MM(\overline{\check{a}})$ as 
\begin{equation}\label{4938oryehdfgfe}
\MM(\overline{\check{a}}) = \check{a} H^2 \dotplus \P_{N - 1},
\end{equation}
where $\mathscr{P}_{N - 1}$ denotes the polynomials of degree at most $N - 1$ and $\dotplus$ denotes the algebraic direct sum (not necessarily an orthogonal sum). We denote the polynomials  of the form \eqref{classA} by $\mathscr{A}$. 

For $a_1, a_2 \in H^\infty$, let 
$$\Mult(\overline{a_1}, \overline{a_2}) := \{\phi \in \mathscr{O}(\D):  \forall f\in \MM(\overline{a_1}),\, \phi f\in  \MM(\overline{a_2})\}$$ denote the set of {\em multipliers} from $\MM(\overline{a_1})$ to $\MM(\overline{a_2})$. When $a_1 = a_2 = a$, we set
$$\Mult(\overline{a}) := \Mult(\overline{a}, \overline{a})$$ 
 to be the multipliers from $\MM(\overline{a})$ to itself. 
From standard theory of reproducing kernel Hilbert spaces of analytic functions on $\mathbb D$, one can show that multipliers (from a space to itself)  must be a subset of $H^\infty$. Moreover, since the constant functions belong to $\MM(\overline{a})$, the multiplier space $\Mult(\overline{a})$ is always contained in  $\MM(\overline{a}) \cap H^{\infty}$. In Proposition \ref{8sd2lsewh} we prove that when $a \in \mathscr{A}$, the two sets coincide,  that is
\begin{equation}\label{nnnnnnncc}
 \Mult(\overline{a}) = \MM(\overline{a}) \cap H^{\infty}.
 \end{equation}
 Since $\Mult(\overline{a})$ is an algebra, \eqref{nnnnnnncc} shows that, at least for $a \in \mathscr{A}$, the set $\MM(\overline{a}) \cap H^{\infty}$ is an algebra. It is worth mentioning here that, for general $a \in H^{\infty}$,  Lotto and Sarason \cite{MR1614726} proved that $\MM(\overline{a}) \cap H^{\infty}$ is not always al algebra and thus some additional conditions on $a$ need to be imposed.

 For more general $a \in H^{\infty}$ this is not always the case (see Remark \ref{q98yrgeouiwergfh}).

Two of the main theorems of this paper are complete descriptions of  $\Mult(\overline{a_1}, \overline{a_2})$ for certain $a_1, a_2 \in \mathscr{A}$. For instance, when $a_1/a_2 \in H^{\infty}$, in other words, the zero set of $a_2$ is contained in the zero set of $a_1$ (counting multiplicity), then $\MM(\overline{a_1}) \subset \MM(\overline{a_2})$ (Proposition \ref{ppppopOO}) and we have the following:

\begin{Theorem}\label{ydayd1818347}
Suppose that $a_1, a_2 \in \A$ and $h = a_1/a_2 \in H^{\infty}$. Then 
$$\Mult(\overline{a_1}, \overline{a_2}) = \{\phi \in \MM(\overline{a_2}): h \phi \in H^{\infty}\}.$$
\end{Theorem}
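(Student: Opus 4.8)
The plan is to prove the two set inclusions separately, using the structural description \eqref{4938oryehdfgfe} of the range spaces as the main tool. Write $a_1 = \check{a}_1$ and $a_2 = \check{a}_2$ with $\deg a_1 = N_1$, $\deg a_2 = N_2$, so that $\MM(\overline{a_i}) = a_i H^2 \dotplus \P_{N_i - 1}$. Since $h = a_1/a_2 \in H^\infty$ and both are polynomials in $\mathscr{A}$, the zeros of $a_2$ (with multiplicity) form a subset of those of $a_1$, so $h$ is itself a polynomial of the form \eqref{classA} (all its zeros lie on $\T$), and $N_1 = N_2 + \deg h$. The key observation I would record up front is that multiplication by $\phi$ maps $\MM(\overline{a_1})$ into $\MM(\overline{a_2})$ if and only if it does so on a set whose span is dense, and by \eqref{4938oryehdfgfe} the space $\MM(\overline{a_1})$ is generated (algebraically plus the closure) by $a_1 H^2$ together with the finite-dimensional piece $\P_{N_1 - 1}$.

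\textbf{The inclusion $\subseteq$.} Suppose $\phi \in \Mult(\overline{a_1},\overline{a_2})$. Since the constant function $1 \in \MM(\overline{a_1})$, applying the multiplier gives $\phi = \phi \cdot 1 \in \MM(\overline{a_2})$; this establishes the first membership condition. For the condition $h\phi \in H^\infty$, I would test $\phi$ against elements of the form $a_1 f$ with $f \in H^2$: these lie in $a_1 H^2 \subset \MM(\overline{a_1})$, so $\phi a_1 f \in \MM(\overline{a_2}) \subset H^2$. Writing $\phi a_1 f = (h\phi) a_2 f$ and using that $\MM(\overline{a_2}) = a_2 H^2 \dotplus \P_{N_2-1}$, I would argue that the $a_2 H^2$-component forces $h \phi f \in H^2$ for every $f \in H^2$ after accounting for the finite-dimensional correction; taking $f$ to be a suitable unit (e.g. $f \equiv 1$, or a sequence approximating the constant $1/a_2$ boundary behaviour) and exploiting that $a_2$ is a polynomial with only boundary zeros should pin down $h\phi \in H^\infty$. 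The cleanest route is probably to note that $\phi \in \MM(\overline{a_2})$ already gives $\phi = a_2 g + p$ with $g \in H^2$, $p \in \P_{N_2-1}$, whence $h\phi = a_1 g + h p$, and since $a_1 g \in H^2$ while $hp$ is a polynomial, $h\phi \in H^2$; then an additional boundedness argument (using that $\phi$ is a genuine multiplier, hence $\phi \in H^\infty$, and that $h \in H^\infty$) upgrades this to $h\phi \in H^\infty$.

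\textbf{The inclusion $\supseteq$.} Suppose $\phi \in \MM(\overline{a_2})$ with $h\phi \in H^\infty$. I must show $\phi f \in \MM(\overline{a_2})$ for every $f \in \MM(\overline{a_1})$. By \eqref{4938oryehdfgfe} it suffices to check this on the two pieces. For $f = a_1 g$ with $g \in H^2$: then $\phi f = \phi a_1 g = (h\phi)\, a_2 g$, and since $h\phi \in H^\infty$ we get $(h\phi) g \in H^2$ (using $g \in H^2$, $h\phi$ bounded), so $\phi f = a_2\big((h\phi)g\big) \in a_2 H^2 \subset \MM(\overline{a_2})$. For $f = p \in \P_{N_1-1}$: here $\phi p$ is a product of $\phi \in \MM(\overline{a_2})$ with a polynomial, and since polynomials are analytic in a neighborhood of $\overline{\D}$ they lie in $\Mult(\overline{a_2})$ by Sarason's result quoted in the introduction, so $\phi p \in \MM(\overline{a_2})$. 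Combining the two pieces over the algebraic direct sum yields $\phi f \in \MM(\overline{a_2})$ for all $f$.

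\textbf{Main obstacle.} The routine half is $\supseteq$, which is essentially bookkeeping with the decomposition \eqref{4938oryehdfgfe} plus the fact that polynomials multiply every $\MM(\overline{a})$ into itself. The delicate half is the forward direction, specifically upgrading the a priori membership $h\phi \in H^2$ to the boundedness $h\phi \in H^\infty$; the subtlety is that $h$ can vanish on $\T$, so dividing is not automatically bounded, and one must use the multiplier property quantitatively (boundedness of the multiplication operator on the reproducing kernel space) rather than merely as a set-theoretic containment. I expect the cleanest argument there to invoke the closed graph theorem to get a bounded multiplication operator $M_\phi$, and then relate its norm to control of $h\phi$ on the boundary, handling the boundary zeros of $h$ via the polynomial structure of $a_1, a_2 \in \mathscr{A}$.
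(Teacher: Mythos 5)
Your $\supseteq$ direction is correct and essentially the paper's argument: split $f=a_1g+p$ via \eqref{4938oryehdfgfe}, absorb $\phi a_1 g=a_2\,(h\phi\, g)$ into $a_2H^2$, and handle the polynomial piece using that polynomials multiply $\MM(\overline{a_2})$ into itself. The first half of $\subseteq$ (that $\phi=\phi\cdot 1\in\MM(\overline{a_2})$) is also fine.

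The genuine gap is in your ``cleanest route'' for proving $h\phi\in H^{\infty}$, which rests on the claim that ``$\phi$ is a genuine multiplier, hence $\phi\in H^{\infty}$.'' That is false here: boundedness of multipliers is automatic only for multipliers of a reproducing kernel space into \emph{itself}, and the paper exhibits a counterexample in exactly this setting immediately after the theorem --- for $a_1=(1+z)(1-z)$, $a_2=1+z$ (so $h=1-z\in H^{\infty}$), the unbounded function $\phi(z)=(1+z)^{1/2+\epsilon}(1-z)^{-1/2+\epsilon}$ belongs to $\Mult(\overline{a_1},\overline{a_2})$. Note moreover that if $\phi\in H^{\infty}$ were automatic, then $h\phi\in H^{\infty}$ would hold trivially (since $h\in H^{\infty}$) and the theorem would collapse to $\Mult(\overline{a_1},\overline{a_2})=\MM(\overline{a_2})\cap H^{\infty}$, which that same example refutes. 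Your alternative sketch --- testing against $a_1f$, $f\in H^2$ --- is the right starting point and is what the paper does, but the step you defer with ``should pin down $h\phi\in H^{\infty}$'' is exactly where the content lies. The paper's mechanism: writing $\phi=a_2\widetilde{\phi}+p$, one has $a_1\widetilde{\phi}=h(\phi-p)$, so it suffices to show $a_1\widetilde{\phi}\in H^{\infty}$. Testing against $f=a_1\widetilde{f}$ gives $\phi f=a_1a_2\widetilde{\phi}\widetilde{f}+pa_2h\widetilde{f}$; the second summand lies in $a_2H^2$, so $a_1a_2\widetilde{\phi}\widetilde{f}=a_2\widetilde{F}+R$ with $R\in\P_{N_2-1}$, hence $a_1\widetilde{\phi}\widetilde{f}=\widetilde{F}+R/a_2$. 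The left side is in $H^1$ and $\widetilde{F}\in H^2$, so the rational function $R/a_2$ is in $H^1$ and therefore has no poles on $\overline{\D}$, i.e.\ $R/a_2\in H^{\infty}$. Consequently $a_1\widetilde{\phi}\widetilde{f}\in H^2$ for every $\widetilde{f}\in H^2$, which makes $a_1\widetilde{\phi}$ a multiplier of $H^2$ into itself and hence bounded. The closed graph theorem does enter, but for the operator $\widetilde{f}\mapsto a_1\widetilde{\phi}\widetilde{f}$ on $H^2$, not to bound $\phi$ itself; without this (or an equivalent) argument your forward inclusion is unproved.
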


When the division is reversed, i.e., $a_2/a_1 \in H^{\infty}$, in other words, the zero set of $a_1$ is contained in the zero set of $a_2$ (counting multiplicity), then $\MM(\overline{a_2}) \subset \MM(\overline{a_1})$ and we have the following:

\begin{Theorem}\label{yyysatta6666}
Suppose $a_1, a_2 \in \A$ with $k:= a_2/a_1 \in H^{\infty}$. Then 
$$\Mult(\overline{a_1}, \overline{a_2}) = k (\MM(\overline{a_1}) \cap H^{\infty}).$$
\end{Theorem}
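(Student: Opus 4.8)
The plan is to prove the two inclusions separately, using the concrete models $\MM(\overline{a_i})=a_iH^2\dotplus\P_{N_i-1}$ from \eqref{4938oryehdfgfe} (here $N_i=\deg a_i$) together with the self-multiplier identity $\Mult(\overline{a})=\MM(\overline{a})\cap H^{\infty}$ from \eqref{nnnnnnncc}. Write $m=\deg k=N_2-N_1$, so that $a_2=ka_1$ and $k$ is a polynomial with all its zeros on $\T$. For the inclusion $\supseteq$, take $\varphi=kg$ with $g\in\MM(\overline{a_1})\cap H^{\infty}=\Mult(\overline{a_1})$. For $f\in\MM(\overline{a_1})$ the self-multiplier property gives $gf\in\MM(\overline{a_1})$, and a one-line degree count shows $k\,\MM(\overline{a_1})\subseteq\MM(\overline{a_2})$: indeed $k\,\MM(\overline{a_1})=a_2H^2\dotplus k\P_{N_1-1}$, and $\deg(kp)\le N_2-1$ whenever $p\in\P_{N_1-1}$. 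Hence $\varphi f=k(gf)\in\MM(\overline{a_2})$, so $\varphi\in\Mult(\overline{a_1},\overline{a_2})$.

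For the inclusion $\subseteq$, fix $\varphi\in\Mult(\overline{a_1},\overline{a_2})$. Since the constants lie in $\MM(\overline{a_1})$, applying the multiplier to $1$ gives $\varphi\in\MM(\overline{a_2})$; and since $\MM(\overline{a_2})\subseteq\MM(\overline{a_1})$ (Proposition \ref{ppppopOO}) the multiplier in fact maps $\MM(\overline{a_1})$ into itself, so $\varphi\in\Mult(\overline{a_1})\subseteq H^{\infty}$. I would then split the argument into two steps: (I) show $g:=\varphi/k\in H^2$ (equivalently $\varphi\in kH^2$); and (II) upgrade this to $g\in\Mult(\overline{a_1})=\MM(\overline{a_1})\cap H^{\infty}$, which is exactly the assertion $\varphi\in k(\MM(\overline{a_1})\cap H^{\infty})$.

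Step (I) is where the real work lies, and it is the step I expect to be the main obstacle. Write $\varphi=a_2v+r_0$ for the decomposition in $\MM(\overline{a_2})=a_2H^2\dotplus\P_{N_2-1}$; then $\varphi\in kH^2$ is equivalent to $k\mid r_0$, i.e. to $r_0$ vanishing to order $\ge s_\xi$ at each zero $\xi$ of $k$ (of order $s_\xi$). To force this I would test $\varphi$ against the extremal functions $f_\xi=a_1\,(1-\overline{\xi}z)^{-\alpha}\in a_1H^2\subseteq\MM(\overline{a_1})$ for a fixed $\alpha\in(0,\tfrac12)$. The hypothesis $\varphi f_\xi\in\MM(\overline{a_2})$ means $\varphi f_\xi/a_2=\varphi\,(1-\overline{\xi}z)^{-\alpha}/k$ lies in $H^2+R_{a_2}$, where $R_{a_2}=\{p/a_2:p\in\P_{N_2-1}\}$ consists of strictly proper rational functions with integer-order poles at the zeros of $a_2$. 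Substituting $\varphi=a_2v+r_0$ splits this into the piece $a_1v\,(1-\overline{\xi}z)^{-\alpha}$, which lies in $H^1$ as a product of two $H^2$ functions, and the explicit piece $r_0\,(1-\overline{\xi}z)^{-\alpha}/k$, which near $\xi$ behaves like $c\,(z-\xi)^{v-s_\xi-\alpha}$ with $c\ne0$, where $v$ is the vanishing order of $r_0$ at $\xi$. Since $\alpha\notin\Z$ this non-integer power cannot be cancelled by any element of $R_{a_2}$, so after subtracting the appropriate rational correction it must itself be locally integrable; the $H^1$ threshold $v-s_\xi-\alpha>-1$ then forces $v\ge s_\xi$. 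Carrying this out at every zero of $k$ gives $k\mid r_0$, hence $\varphi\in k\,\MM(\overline{a_1})\subseteq kH^2$ and $g=\varphi/k\in H^2$.

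Step (II) is then short. For any $f\in\MM(\overline{a_1})\subseteq H^2$ the product $gf$ lies in $H^1$, while the decomposition $\varphi f=a_2H_f+P_f$ in $\MM(\overline{a_2})$ yields $gf=\varphi f/k=a_1H_f+P_f/k$; hence $P_f/k=gf-a_1H_f\in H^1$. But $P_f/k$ is rational with poles only at the zeros of $k$ on $\T$, and a nonzero such function has a non-integrable boundary singularity, so it cannot lie in $H^1$ unless $k\mid P_f$. Then $P_f/k\in\P_{N_1-1}$ and $gf=a_1H_f+P_f/k\in\MM(\overline{a_1})$, i.e. $g\in\Mult(\overline{a_1})$. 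By \eqref{nnnnnnncc} this gives $g\in\MM(\overline{a_1})\cap H^{\infty}$, and therefore $\varphi=kg\in k(\MM(\overline{a_1})\cap H^{\infty})$, completing the inclusion. Everything outside Step (I) is soft; the genuine difficulty is extracting the exact vanishing order of $\varphi$ at the boundary zeros of $k$, and it is precisely here (through the fractional-singularity analysis, which needs the zeros to sit on $\T$) that the hypothesis $a_1,a_2\in\A$ is essential.
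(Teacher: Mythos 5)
Your proposal is correct, and its overall architecture matches the paper's: prove $k\,\MM(\overline{a_1})\subset\MM(\overline{a_2})$ and combine with Proposition \ref{8sd2lsewh} for the easy inclusion; for the converse, first show that the $\P_{N_2-1}$-component of $\phi$ in \eqref{4938oryehdfgfe} is divisible by $k$, then show that $\phi/k$ is bounded. Both delicate steps, however, are executed differently. For the divisibility, the paper tests $\phi$ against $a_1h$ for \emph{arbitrary} $h\in H^2$: writing $a_1hp=a_1(kg+\gamma)+\delta$ with $g\in H^1$, it uses the growth bound $|g(r\xi)|\lesssim (1-r)^{-1}$ to conclude that $hp$ admits radial limits (and lower-order derivatives) at every zero of $k$ for every $h\in H^2$, which forces $p$ to vanish there to full order; you instead test against the single fractional-power function $a_1(1-\overline{\xi}z)^{-\alpha}$ and exploit the incompatibility of the non-integer singularity $(z-\xi)^{v-s_\xi-\alpha}$ with the integer-order singularities of $R_{a_2}$ together with the $L^1$ threshold. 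This is a legitimate alternative; the one place to write out carefully is the ``cannot be cancelled'' claim (compare the leading exponents of the fractional and rational pieces: whichever is smaller dominates the difference near $\xi$, and in either case an exponent $\leq -1$ contradicts integrability). For the boundedness of $\phi_0=\phi/k$, the paper uses the operator inequality $\|\phi(a_1h)\|_{\overline{a_2}}\lesssim\|a_1h\|_{\overline{a_1}}$ together with the norm equivalence \eqref{xxx22625qwrlkncv<<} to realize $\phi_0$ as a bounded multiplier of $H^2$; you instead verify directly that $\phi_0\,\MM(\overline{a_1})\subset\MM(\overline{a_1})$ --- since $P_f/k$ is a difference of $H^1$ functions yet has poles only on $\T$, it must be a polynomial of degree at most $N_1-1$ --- and then invoke \eqref{nnnnnnncc}. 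Your route for this last step is softer (it needs no norm estimates and bypasses \eqref{xxx22625qwrlkncv<<} entirely), at the modest cost of rerunning the ``no integrable boundary pole'' argument for each $f$; the paper's version is more quantitative and yields the multiplier bound explicitly.
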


When $a \in \mathscr{A}$ (and $\|a\|_{\infty} \leq 1$) then $\MM(\overline{a}) = \mathcal{H}(b)$, where $\mathcal{H}(b)$ is the de Branges-Rovnyak space corresponding to $b$ and $b$ is the {\em Pythagorean mate} for $a$: the unique outer function in $H^{\infty}$ such that $b(0)>0$ and $|a|^2 + |b|^2 = 1$ on $\T$. It turns out that since $a$ is a rational function (in fact a polynomial), then $b$ will also be a rational function. So, our description of $\Mult(\overline{a_1}, \overline{a_2})$ for certain $a_1, a_2 \in \mathscr{A}$, yields a description of the multipliers between the de Branges-Rovnyak spaces $\mathcal{H}(b_1)$ and $\mathcal{H}(b_2)$ for the corresponding Pythagorean mates $b_1$ and $b_2$. 

We have already noticed that the special function $\phi(z) = z$ multiplies $\MM(\overline{a})$ to itself for every $a \in H^{\infty}$. Our next result computes the norm of the multiplication operator $f \mapsto z f$ on $\MM(\overline{a})$. 

\begin{Theorem}\label{10w74hs-}
 If $S f = z f$ is the unilateral shift on $H^2$, then, for any outer function $a\in H^{\infty}$, $S \MM(\overline{a}) \subset \MM(\overline{a})$ and the norm of $S_{\overline{a}} := S|_{\MM(\overline{a})}$
 is equal to
$$ \frac{1}{|a(0)|} \left(\int_{0}^{2 \pi} |a(e^{i \theta})|^2 \frac{d \theta}{2 \pi}\right)^{1/2}.$$
\end{Theorem}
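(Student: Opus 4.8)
The plan is to transfer the computation from $\MM(\overline a)$ to $H^2$ using $T_{\overline a}$ itself as an intertwining unitary. Since $a$ is outer, $T_{\overline a}$ is injective (for outer $a$ one has $\ker T_{\overline a} = \{0\}$, the kernel in general being the model space attached to the inner part of $a$), so by the very definition of the range norm the map $T_{\overline a}\colon H^2 \to \MM(\overline a)$ is an isometric isomorphism; in particular $\|T_{\overline a} f\|_{\MM(\overline a)} = \|f\|_{H^2}$. Consequently $\|S_{\overline a}\| = \|X\|$, where $X := T_{\overline a}^{-1} S_{\overline a} T_{\overline a}$ is an operator on $H^2$, once we know $S\MM(\overline a)\subset\MM(\overline a)$ so that $S_{\overline a}$ and $X$ are well defined.

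First I would record the commutation relation between $S$ and $T_{\overline a}$. Writing $P_+$ for the Riesz projection of $L^2(\T)$ onto $H^2$, a direct Fourier computation comparing $z\,P_+(\overline a f)$ with $P_+(z\overline a f)$ gives, for $f\in H^2$,
$$S T_{\overline a} f = T_{\overline a} S f - \widehat{\overline a f}(-1)\cdot 1,$$
where $1$ is the constant function and $\widehat{\overline a f}(-1)$ is the $(-1)$st Fourier coefficient of $\overline a f$; the content is that the two expressions differ by exactly this one constant. One checks that $\widehat{\overline a f}(-1) = \langle f, S^* a\rangle_{H^2}$, and that $T_{\overline a} 1 = P_+(\overline a) = \overline{a(0)}$, so (using $a(0)\ne 0$, which holds because $a$ is outer) $T_{\overline a}^{-1}(1) = 1/\overline{a(0)}$. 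Since the constant function lies in $\MM(\overline a)$ and $T_{\overline a} S f\in\MM(\overline a)$, the displayed identity already shows $S\MM(\overline a)\subset\MM(\overline a)$, and applying $T_{\overline a}^{-1}$ exhibits $X$ as a rank-one perturbation of the shift,
$$X f = S f - \Big\langle f, \tfrac{1}{a(0)} S^* a\Big\rangle_{H^2} 1 =: S f - \langle f, u\rangle\, 1, \qquad u := \tfrac{1}{a(0)} S^* a.$$

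The norm of $X$ is then computed directly. Expanding $\|Xf\|^2$, the crucial simplification is that the cross term vanishes, because $\langle Sf, 1\rangle_{H^2} = \widehat{zf}(0) = 0$; hence
$$\|X f\|^2 = \|Sf\|^2 + |\langle f, u\rangle|^2 = \|f\|^2 + |\langle f, u\rangle|^2.$$
Taking the supremum over $\|f\| = 1$ gives $\|X\|^2 = 1 + \|u\|^2$, the supremum being attained at $f = u/\|u\|$. Finally $\|u\|^2 = |a(0)|^{-2}\|S^* a\|^2 = |a(0)|^{-2}\big(\|a\|_{H^2}^2 - |a(0)|^2\big)$, so $\|X\|^2 = |a(0)|^{-2}\|a\|_{H^2}^2$, which is exactly the square of the asserted quantity.

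The genuinely load-bearing steps are the exact commutation identity (getting the single correcting constant right) and the vanishing of the cross term, which upgrades the obvious estimate $\|X\|\le (1+\|u\|^2)^{1/2}$ to an equality and so pins the norm down precisely. I expect the main technical care to go into justifying that $T_{\overline a}$ is an isometric isomorphism onto $\MM(\overline a)$ — the injectivity of $T_{\overline a}$ for outer $a$ together with the resulting identification of the range norm with the $H^2$ norm — since every subsequent step depends on conjugating by this unitary.
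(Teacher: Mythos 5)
Your proof is correct, and it reaches the theorem by a route that is organized differently from the paper's. The paper first proves a separate proposition computing the adjoint of the restricted \emph{backward} shift, $X_{\overline{a}}^{*}=S_{\overline{a}}+1\otimes_{\overline{a}}T_{\overline{a}}Ba$, via reproducing kernels and the projections $P_{\pm}$, and then extracts $S_{\overline{a}}^{*}S_{\overline{a}}f=f-\langle S_{\overline{a}}f,1\rangle_{\overline{a}}T_{\overline{a}}Ba$ to obtain the identity $\|S_{\overline{a}}f\|_{\overline{a}}^{2}=\|f\|_{\overline{a}}^{2}+\|1\|_{\overline{a}}^{2}\,|\langle f,T_{\overline{a}}Ba\rangle_{\overline{a}}|^{2}$, with the extremal vector $f=T_{\overline{a}}Ba$. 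You instead exploit from the outset that $T_{\overline{a}}\colon H^{2}\to\MM(\overline{a})$ is a surjective isometry for the range norm, and compute the intertwining defect $ST_{\overline{a}}f=T_{\overline{a}}Sf-\widehat{\overline{a}f}(-1)\cdot 1$ by elementary Fourier analysis, so that $S_{\overline{a}}$ is unitarily equivalent to the rank-one perturbation $Sf-\langle f,u\rangle 1$ of the shift on $H^{2}$ with $u=a(0)^{-1}S^{*}a$; the vanishing cross term $\langle Sf,1\rangle_{H^{2}}=0$ then gives $\|Xf\|^{2}=\|f\|^{2}+|\langle f,u\rangle|^{2}$, which is exactly the paper's identity transported to $H^{2}$ (note $\langle f,u\rangle_{H^{2}}=\overline{a(0)}^{-1}\langle T_{\overline{a}}f,T_{\overline{a}}Ba\rangle_{\overline{a}}$ and $\|1\|_{\overline{a}}=1/|a(0)|$, so your extremal vector $u$ corresponds to the paper's $T_{\overline{a}}Ba$). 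The two arguments are thus equivalent at bottom, but yours is more self-contained and elementary — it bypasses the reproducing-kernel computation of $X_{\overline{a}}^{*}$ entirely — while the paper's detour buys the adjoint formula for the backward shift on $\MM(\overline{a})$, which is of independent interest. The only points worth making explicit in a write-up are that $a(0)\neq 0$ because $a$ is outer (you use this to invert $T_{\overline{a}}$ on constants), and that when $a$ is constant one has $u=0$ and the claimed norm is trivially $1$, so the attainment statement ``at $f=u/\|u\|$'' should be read with that degenerate case set aside.
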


Finally, we explore, as was done in other multiplier space and range space settings \cite{MR1098860, MR1065054}, which functions belong to {\em all} of the multiplier spaces $\Mult(\overline{a})$. 
%

\begin{Theorem}\label{Thm:multiplier-everyMabar}
$$\bigcap_{a \in H^{\infty} \setminus \{0\}} \Mult(\overline{a}) = \mathscr{F},$$
where $\mathscr{F}$ is the set of $\psi\in H^\infty$ whose Fourier coefficients satisfy
$$
\widehat{\psi}(n)=O(e^{-c \sqrt{n}}), \quad n \geqslant 0,$$
for some $c>0$. 
\end{Theorem}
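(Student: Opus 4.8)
The plan is to establish the two inclusions $\mathscr{F} \subset \bigcap_{a} \Mult(\overline{a})$ and $\bigcap_{a} \Mult(\overline{a}) \subset \mathscr{F}$ separately, following the strategy of Davis and McCarthy \cite{MR1098860} adapted to the range spaces $\MM(\overline{a})$. First I would reduce the intersection to outer symbols: if $a = \theta a_o$ with $\theta$ inner and $a_o$ outer, then $T_{\overline{\theta}}$ is a coisometry onto $H^2$, so $\MM(\overline{a}) = \Rg T_{\overline{a}} = \Rg(T_{\overline{a_o}} T_{\overline{\theta}}) = \MM(\overline{a_o})$ isometrically, whence $\Mult(\overline{a}) = \Mult(\overline{a_o})$; after rescaling via Proposition \ref{ppppopOO} the intersection runs over non-extreme outer $a$ in the unit ball, for which $\MM(\overline{a}) = \mathcal{H}(b)$ with $b$ the Pythagorean mate. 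The central computational device is the semicommutator identity for Toeplitz operators: writing $P_+, P_-$ for the Riesz projections and $H_u = P_- M_u|_{H^2}$ for the Hankel operator with symbol $u$, one computes, using $H_\psi = 0$ for $\psi \in H^{\infty}$,
\[
T_\psi T_{\overline{a}} = T_{\overline{a}} T_\psi - H_{\overline{\psi}}^{*} H_{\overline{a}}.
\]
Since $T_{\overline{a}} T_\psi$ already maps into $\MM(\overline{a}) = \Rg T_{\overline{a}}$, the operator $T_\psi$ preserves $\MM(\overline{a})$ precisely when $H_{\overline{\psi}}^{*} H_{\overline{a}}$ carries $H^2$ boundedly into $\MM(\overline{a})$; this is the Lotto--Sarason mechanism \cite{MR1254125} that I would render uniform in $a$.

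For the inclusion $\mathscr{F} \subset \bigcap_{a}\Mult(\overline{a})$, fix $\psi$ with $\widehat{\psi}(n) = O(e^{-c\sqrt{n}})$ and an arbitrary outer $a$. Because $T_{\overline{a}}$ is injective with $\|T_{\overline{a}} f\|_{\MM(\overline{a})} = \|f\|_{H^2}$, it is an isometric isomorphism of $H^2$ onto $\MM(\overline{a})$, so by the identity above it suffices to show that $H_{\overline{\psi}}^{*} H_{\overline{a}}$ maps $H^2$ into $\MM(\overline{a})$ with norm controlled independently of $a$. The point is that in the monomial basis $H_{\overline{\psi}}$ is the Hankel matrix built from $(\widehat{\psi}(k))_{k \geq 1}$, so the extremely fast decay $O(e^{-c\sqrt{n}})$ makes $H_{\overline{\psi}}$ a strongly smoothing operator; I would quantify this (via a Schatten-class or explicit kernel estimate) to place $\Rg H_{\overline{\psi}}^{*}$ in a fixed space of analytic functions contained in every $\MM(\overline{a})$, with uniform norm bound. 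Tracking constants, the scale at which this estimate survives the worst-case smallness of $a$ is exactly the $\sqrt{n}$ threshold.

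For the reverse inclusion, suppose $\psi$ multiplies every $\MM(\overline{a})$. Testing on the constant function $1 \in \MM(\overline{a})$ already gives $\psi \in \bigcap_{a}\MM(\overline{a})$; to extract the rate, I would apply the multiplier hypothesis to a tailored one-parameter family of outer functions $a_t$ with concentrated modulus, say $\log|a_t(e^{i\theta})| \asymp -t\,\omega(\theta)$ for a fixed integrable weight $\omega$ with a strong singularity at a boundary point (the outer companions of the atomic singular inner functions, whose Pythagorean mates $b_t$ approach an inner function as $t$ grows). For each $t$, the reproducing-kernel positivity characterizing membership in $\Mult(\mathcal{H}(b_t))$, together with the explicit kernel $k_w^{b_t}(z) = (1 - \overline{b_t(w)}\,b_t(z))/(1 - \overline{w}z)$, yields an inequality of the form $|\widehat{\psi}(n)| \leq C\,\Phi(t,n)$; optimizing the free parameter $t = t(n)$ then produces $|\widehat{\psi}(n)| = O(e^{-c\sqrt{n}})$, i.e. $\psi \in \mathscr{F}$.

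The main obstacle I expect lies in the sufficiency direction: making the Hankel estimate genuinely uniform in $a$ while matching the precise exponent $\tfrac12$. One must show that the smoothing supplied by $e^{-c\sqrt{n}}$ decay is exactly strong enough to absorb the worst-case smallness of $a$ (equivalently, the worst-case nearness of the Pythagorean mate $b$ to an inner function), and that no faster decay is forced. Pinning down this sharp balance, and verifying that the extremal family $a_t$ in the necessity argument realizes the very same $\sqrt{n}$ scale, is where the quantitative heart of the proof resides.
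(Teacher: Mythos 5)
Your proposal is a program rather than a proof: in both directions the decisive quantitative step is announced but never carried out, and you acknowledge as much in your final paragraph. Concretely, in the sufficiency direction everything hinges on showing that the range of $H_{\overline{\psi}}^{*}$ sits inside a space contained in \emph{every} $\MM(\overline{a})$. The coefficient estimate is routine (the $n$-th Taylor coefficient of $H_{\overline{\psi}}^{*}g$ is bounded by $\|g\|_{H^2}\bigl(\sum_{j\geq 0}e^{-2c\sqrt{n+j+1}}\bigr)^{1/2}=O(e^{-c'\sqrt{n}})$), but the assertion that coefficient decay of order $e^{-c'\sqrt{n}}$ forces membership in every $\MM(\overline{a})$ is precisely the hard half of the known identity $\bigcap_{\phi}\MM(\overline{\phi})=\mathscr{F}$ (see \eqref{pptttfffffzzz2}, from \cite{MR1065054}), and you neither prove it nor cite it; your talk of Schatten classes and of the estimate ``surviving the worst-case smallness of $a$'' does not substitute for it. In the necessity direction you never specify the extremal family $a_t$, the function $\Phi(t,n)$, or the optimization; moreover this entire computation is superfluous, because once you have observed (by testing on the constant $1$) that $\psi\in\bigcap_{a}\MM(\overline{a})$, the same identity \eqref{pptttfffffzzz2} already yields $\psi\in\mathscr{F}$.

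For comparison, the paper's proof is essentially formal: it sandwiches the intersection between two known ones via
$\mathscr{F}=\bigcap_{b\in\mathscr{B}}\Mult(\mathscr{H}(b))\subset\bigcap_{a\in\mathscr{B}}\Mult(\overline{a})\subset\bigcap_{a\in\mathscr{B}}\MM(\overline{a})=\mathscr{F}$,
using the Davis--McCarthy theorem \eqref{oosdpfisdf11}, the inclusion $\Mult(\mathscr{H}(b))\subset\Mult(\overline{a})$ for the Pythagorean mate $a$ of $b$, and \eqref{pptttfffffzzz2}, together with the reduction from arbitrary $a\in H^{\infty}\setminus\{0\}$ to $a\in\mathscr{B}$. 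If you are permitted to quote \eqref{pptttfffffzzz2}, then your semicommutator identity $T_{\psi}T_{\overline{a}}=T_{\overline{a}}T_{\psi}-H_{\overline{\psi}}^{*}H_{\overline{a}}$ does give a legitimate alternative route to the sufficiency half, one that bypasses the Davis--McCarthy multiplier theorem entirely; but as written, with the uniform smoothing estimate left as an acknowledged ``obstacle,'' the argument has a genuine gap at exactly the point where the theorem's content lies.
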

Note that when $\widehat{\psi}(n)=O(e^{-cn})$, then $\psi$ is analytic on a neighborhood of $\overline{\mathbb D}$ (Hadamard's formula for the radius of convergence of a power series) and then, by the result of Sarason mentioned above, $\psi$ is a multiplier of every $\MM(\overline{a})$. Thus Theorem~\ref{Thm:multiplier-everyMabar} is an improvement of this fact.

\section{Basic fact about range spaces}

For $a \in H^{\infty}$ and outer,  the co-analytic Toeplitz operator $T_{\overline{a}}$ \cite{Bottcher} on the Hardy space $H^2$ \cite{Duren}  is injective (Since $a$ is outer, the analytic Toeplitz operator $T_{a} = T_{\overline{a}}^{*}$ has dense range). One can define the range space as 
$$\MM(\overline{a}) := T_{\overline{a}} H^2$$ and, since $T_{\overline{a}}$ is injective, endow $\MM(\overline{a})$ with the range norm $\|\cdot\|_{\overline{a}}$ defined by 
\begin{equation}\label{Tzznorm}
\|T_{\overline{a}} f\|_{\overline{a}} := \|f\|_{H^2} = \left(\int_{0}^{2 \pi} |f(e^{i \theta})|^2 \frac{d \theta}{2 \pi}\right)^{1/2}.
\end{equation}
In the above, we are norming, in the standard way, $H^2$ functions via their radial $L^2 = L^2(d \theta/2 \pi)$ boundary values on $\T$ \cite[p.~21]{Duren}. One can show  \cite[Corollary 3.4]{FHR-Ma} that 
$\MM(\overline{a})$ is a reproducing kernel Hilbert space with kernel function 
\begin{equation}\label{1snsdrgnnzz9}
k_{\lambda}^{\overline{a}} = T_{\overline{a}} (a k_{\lambda}),\qquad \lambda\in\mathbb D,
\end{equation}
where 
\begin{equation}\label{CK}
 k_{\lambda}(z) = \frac{1}{1 - \overline{\lambda} z},\qquad z\in\mathbb D,
 \end{equation}
  is the standard reproducing kernel (the Cauchy kernel) for $H^2$. By the term ``reproducing kernel''  for $\MM(\overline{a})$, we mean 
$$\langle f, k_{\lambda}^{\overline{a}}\rangle_{\overline{a}} = f(\lambda), \quad f \in \MM(\overline{a}),  \lambda \in \D,$$
where $\langle \cdot, \cdot\rangle_{\overline{a}}$ is the inner product arising from the Hilbert space norm in \eqref{Tzznorm}. From time to time we will need the corresponding inner product on $H^2$ which we will denote by $\langle \cdot, \cdot\rangle_{H^2}$.

Observe that when $a\in\mathscr{A}$, \eqref{4938oryehdfgfe} implies that $aH^2\subset\MM(\overline{a})$. In fact, this is true for any function $a\in H^\infty$ since
 $$T_{a}f=T_{\overline{a}}T_{a/\overline{a}}f, \quad f\in H^2.$$ 

Let us complete this preliminary section by showing how to reduce the problem of describing the multiplier space $\Mult(\overline{a_1}, \overline{a_2})$, for rational $a_1, a_2 \in H^{\infty}$, to that of $\check{a_1}, \check{a_2} \in \mathscr{A}$ described in \eqref{classA}. Basic theory of Hardy spaces \cite[p.~24]{Duren} says that every $a \in H^{\infty}$ can be factored as $a = u a_0$, where $u \in H^{\infty}$ is inner and $a_0 \in H^{\infty}$ is outer. Using the Douglas factorization lemma \cite[p.~2]{Sa}, one can prove the following two results. 

\begin{Proposition}{\cite[Lemma 17.3]{MR3617311}}\label{yyyyfgggfggf}
If $a \in H^{\infty}$ and $a_0$ is its outer factor, then $\MM(\overline{a}) = \MM(\overline{a_0})$.
\end{Proposition}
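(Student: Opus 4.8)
The plan is to reduce the equality of the two range spaces to an identity between the self-adjoint operators $T_{\ov a}T_{\ov a}^{*}$ and $T_{\ov{a_0}}T_{\ov{a_0}}^{*}$, and then to invoke the range-space form of Douglas's factorization lemma. The precise consequence of that lemma I will use is the following: for bounded operators $A,B$ on a Hilbert space, $\MM(A)=\Rg A$ equipped with its range norm embeds contractively into $\MM(B)$ exactly when $AA^{*}\leq BB^{*}$; in particular $AA^{*}=BB^{*}$ forces $\MM(A)=\MM(B)$ as sets and isometrically. Write $a=ua_0$ with $u$ inner and $a_0$ outer, so that $\ov a=\ov u\,\ov{a_0}$ and $|a|=|a_0|$ a.e.\ on $\T$.

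The computation of $AA^{*}$ for $A=T_{\ov a}$ is short. Since the adjoint of a co-analytic Toeplitz operator is analytic, $T_{\ov a}^{*}=T_a$, and because $a\in H^{\infty}$ the product collapses with no Hankel correction term: for $f\in H^2$ one has $af\in H^2$, so
\begin{equation*}
T_{\ov a}T_a f=P_+(\ov a\,a f)=P_+(|a|^2 f)=T_{|a|^2}f,
\end{equation*}
where $P_+$ is the Riesz projection of $L^2(\T)$ onto $H^2$. Thus $T_{\ov a}T_{\ov a}^{*}=T_{|a|^2}$, and likewise $T_{\ov{a_0}}T_{\ov{a_0}}^{*}=T_{|a_0|^2}$. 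Since $|a|^2=|a_0|^2$ on $\T$, these two Toeplitz operators have the same symbol, whence $T_{\ov a}T_{\ov a}^{*}=T_{\ov{a_0}}T_{\ov{a_0}}^{*}$.

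Applying the Douglas principle to $A=T_{\ov a}$ and $B=T_{\ov{a_0}}$ then yields $\MM(\ov a)=\MM(\ov{a_0})$ with equal range norms, which is the assertion. The one point that deserves care---and the main obstacle to a naive argument---is that when $u$ is non-constant $T_{\ov a}$ is not injective (its kernel is the model space $\K_u=(uH^2)^{\perp}$), so $\MM(\ov a)$ must be read with the range norm defined by the quotient/pseudo-inverse, i.e.\ $\|T_{\ov a}f\|_{\ov a}=\dist(f,\ker T_{\ov a})$; the $AA^{*}$-formulation of Douglas is exactly what makes this interpretation automatic. As a cross-check (and an alternative route avoiding Douglas), one can factor $T_{\ov a}=T_{\ov{a_0}}T_{\ov u}$---valid because $\ov{a_0}$ is co-analytic---and observe that $T_{\ov u}=T_u^{*}$ is a coisometry that restricts to a unitary from $uH^2$ onto $H^2$; this gives the set equality $\Rg T_{\ov a}=\Rg T_{\ov{a_0}}$ at once and, tracking norms through that unitary, the isometry as well. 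One may also verify directly from \eqref{1snsdrgnnzz9} that the reproducing kernels agree, since $k_\lambda^{\ov a}=T_{\ov a}(a k_\lambda)=T_{|a|^2}k_\lambda=T_{|a_0|^2}k_\lambda=k_\lambda^{\ov{a_0}}$, confirming the isometric identification.
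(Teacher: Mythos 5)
Your argument is correct and is essentially the paper's own: both compute $T_{\overline{a}}T_{\overline{a}}^{*}=T_{|a|^{2}}=T_{|a_0|^{2}}=T_{\overline{a_0}}T_{\overline{a_0}}^{*}$ using $|a|=|a_0|$ on $\T$ and then invoke the Douglas factorization lemma to conclude equality of the range spaces (with equal norms). Your additional remarks on the non-injectivity of $T_{\overline{a}}$ and the alternative routes via $T_{\overline{a}}=T_{\overline{a_0}}T_{\overline{u}}$ or the reproducing kernels are sound but not needed beyond the paper's argument.
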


\begin{Proposition}{\cite[Lemma 17.5]{MR3617311}}\label{ppppopOO}
If $a_1, a_2 \in H^{\infty}$ and outer then
$$\frac{a_1}{a_2} \in H^{\infty} \iff \MM(\overline{a_1}) \subset \MM(\overline{a_2}).$$
\end{Proposition}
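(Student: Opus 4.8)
The plan is to apply the Douglas factorization lemma \cite{Sa} to the operators $A = T_{\overline{a_1}}$ and $B = T_{\overline{a_2}}$ on $H^2$. Since $\MM(\overline{a_i}) = T_{\overline{a_i}} H^2$ by definition, the inclusion $\MM(\overline{a_1}) \subset \MM(\overline{a_2})$ is exactly the range-inclusion condition $A H^2 \subset B H^2$, which the lemma equates with the existence of $\lambda > 0$ such that $A A^* \leq \lambda\, B B^*$. Using the identity $T_{\overline a} T_{a} = T_{|a|^2}$ (valid because $a \in H^\infty$; see \cite{Bottcher}), I would first record that $A A^* = T_{\overline{a_1}} T_{a_1} = T_{|a_1|^2}$ and $B B^* = T_{|a_2|^2}$, so the whole proposition reduces to showing that $T_{|a_1|^2} \leq \lambda\, T_{|a_2|^2}$ for some $\lambda>0$ if and only if $a_1/a_2 \in H^\infty$. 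Writing the operator inequality as a comparison of quadratic forms via $\langle T_w f, f\rangle_{H^2} = \int_\T w\,|f|^2\,dm$ (with $dm = d\theta/2\pi$ normalized Lebesgue measure and $w \geq 0$), this becomes the weighted-norm inequality $\int_\T |a_1|^2 |f|^2\,dm \leq \lambda \int_\T |a_2|^2 |f|^2\,dm$ for every $f \in H^2$.

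The forward implication is immediate: if $h = a_1/a_2 \in H^\infty$ then $|a_1|^2 = |h|^2 |a_2|^2 \leq \|h\|_\infty^2\,|a_2|^2$ a.e.\ on $\T$, so the form inequality holds with $\lambda = \|h\|_\infty^2$. (Equivalently, one may bypass Douglas in this direction and factor $T_{\overline{a_1}} = T_{\overline{a_2}} T_{\overline h}$ using the multiplication rule for Toeplitz symbols, since $\overline{a_2}$ is co-analytic; this gives the range inclusion directly.) The substance of the argument is the reverse direction: extracting a pointwise bound on $\T$ from the integral inequality. For this I would test against the normalized Cauchy kernels $\widehat{k}_w = \sqrt{1-|w|^2}\,k_w$, $w \in \D$, with $k_w$ as in \eqref{CK}. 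Since $|\widehat{k}_w(e^{i\theta})|^2$ is precisely the Poisson kernel $P_w(\theta)$, applying the inequality to $f = \widehat{k}_w$ yields $P[|a_1|^2](w) \leq \lambda\, P[|a_2|^2](w)$ for all $w \in \D$, where $P[\cdot]$ denotes the Poisson integral.

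Letting $w \to \zeta$ radially and invoking Fatou's theorem on the a.e.\ convergence of Poisson integrals of $L^1$ functions at their Lebesgue points, I would conclude $|a_1(\zeta)|^2 \leq \lambda\,|a_2(\zeta)|^2$ for a.e.\ $\zeta \in \T$, i.e.\ $|a_1/a_2| \leq \sqrt\lambda$ a.e. Finally, because $a_2$ is outer, $1/a_2$ lies in the Smirnov class $N^+$, hence so does the quotient $a_1/a_2$; a function in $N^+$ with essentially bounded boundary values belongs to $H^\infty$ (Smirnov's maximum principle \cite{Duren}), which gives $a_1/a_2 \in H^\infty$ and completes the proof. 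The main obstacle is exactly this reverse-direction step of passing from the operator (equivalently, weighted $L^2$) inequality to the pointwise a.e.\ inequality of boundary moduli; the Poisson-kernel/Fatou device handles it cleanly, although one could instead appeal to the Hartman--Wintner description of the spectrum of a self-adjoint Toeplitz operator, giving $T_w \geq 0 \Leftrightarrow w \geq 0$ a.e.\ applied to $w = \lambda|a_2|^2 - |a_1|^2$.
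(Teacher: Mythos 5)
Your proof is correct and follows essentially the same route as the paper's: the forward direction via the factorization $T_{\overline{a_1}} = T_{\overline{a_2}}T_{\overline{h}}$ (or the equivalent pointwise bound), and the converse via the Douglas lemma, testing the resulting operator inequality against Cauchy kernels to obtain a comparison of Poisson integrals, passing to radial boundary values, and finishing with Smirnov's maximum principle. The only cosmetic difference is your use of normalized kernels, which absorbs the paper's final multiplication by $1-|\lambda|^2$.
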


Our final reduction from rational  $H^{\infty}$ functions to the class $\mathscr{A}$ comes from applying the previous two propositions. We leave the details to the reader. 

\begin{Proposition}\label{93848bbczvcvgvgvgvgv}
Suppose $a \in H^{\infty}$ and rational and let $\check{a}$ be the polynomial defined by 
$$\check{a}(z) = \prod_{j = 1}^{N} (z - \zeta_j),$$
where $\zeta_1, \ldots, \zeta_N$ are the zeros of $a$ on $\T$, repeated according to multiplicity. 
Then $\MM(\overline{a}) = \MM(\overline{\check{a}})$. 
\end{Proposition}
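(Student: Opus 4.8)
The plan is to leverage the two preceding propositions exactly as the authors indicate, reducing everything to a bookkeeping of zeros and poles. First I would take the inner--outer factorization $a = B a_0$. Because $a$ is rational and bounded on $\D$, its inner part $B$ is simply the finite Blaschke product built from the zeros of $a$ lying in $\D$ (a rational function carries no singular inner factor), and $a_0$ is its rational outer factor. Proposition~\ref{yyyyfgggfggf} then gives $\MM(\overline{a}) = \MM(\overline{a_0})$, so it suffices to establish $\MM(\overline{a_0}) = \MM(\overline{\check{a}})$. Before invoking Proposition~\ref{ppppopOO} I would record that $\check{a}$ is itself outer: it is a polynomial (hence in $H^\infty$) whose only zeros lie on $\T$, and a rational $H^\infty$ function is outer precisely when it has no zeros inside $\D$. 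Thus both $a_0$ and $\check{a}$ are outer, and Proposition~\ref{ppppopOO} is available.

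The substantive step is to locate all zeros and poles of $a_0$. Writing $a = P/Q$ with $Q$ nonvanishing on $\overline{\D}$ (boundedness forces every pole of $a$ strictly outside $\overline{\D}$), and dividing out the Blaschke factors, one checks that each factor $(z-\alpha_i)$ coming from an interior zero $\alpha_i$ is traded for a factor $(1 - \overline{\alpha_i} z)$, whose zero $1/\overline{\alpha_i}$ sits outside $\overline{\D}$. Consequently $a_0$ is, up to a unimodular constant, a rational function whose zeros consist exactly of the boundary zeros $\zeta_1, \ldots, \zeta_N$ of $a$ together with zeros lying strictly outside $\overline{\D}$ (the original exterior zeros of $a$ and the reflected points $1/\overline{\alpha_i}$), and whose poles all lie outside $\overline{\D}$. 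Therefore the quotient $g := a_0/\check{a}$ is a rational function that is analytic and nonvanishing on a neighborhood of $\overline{\D}$, so both $g$ and $1/g$ belong to $H^\infty$.

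Finally, since $a_0/\check{a} = g \in H^\infty$ and $\check{a}/a_0 = 1/g \in H^\infty$, I would apply Proposition~\ref{ppppopOO} in both directions to obtain $\MM(\overline{a_0}) \subset \MM(\overline{\check{a}})$ and $\MM(\overline{\check{a}}) \subset \MM(\overline{a_0})$, hence $\MM(\overline{a_0}) = \MM(\overline{\check{a}})$. Combining this with $\MM(\overline{a}) = \MM(\overline{a_0})$ yields the claim. I expect the only delicate point to be the middle step: verifying that, once the Blaschke factors are removed, the boundary zeros are captured precisely by $\check{a}$ while every remaining zero and pole is pushed strictly outside $\overline{\D}$. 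Everything surrounding this is a routine appeal to the two cited propositions.
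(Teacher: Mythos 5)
Your proposal is correct and follows essentially the same route as the paper's own argument: strip the (finite Blaschke) inner factor via Proposition~\ref{yyyyfgggfggf}, observe that the rational outer factor differs from $\check{a}$ only by zeros and poles lying strictly outside $\overline{\D}$, and apply Proposition~\ref{ppppopOO} in both directions. Your explicit check that $\check{a}$ is outer (so that Proposition~\ref{ppppopOO} applies) is a point the paper leaves implicit, and is a welcome addition.
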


\section{Multiplier spaces}

As already noticed, from the general theory of reproducing kernel Hilbert spaces, a multiplier from such a space to {\em itself} must be a bounded function and thus $\Mult(\overline{a}) \subset H^{\infty}$. Since the constant functions belong to $\MM(\overline{a})$ we see that $\Mult(\overline{a}) \subset H^{\infty} \cap \MM(\overline{a})$. For functions $a\in \A$, we have equality. Note that this fact was already observed by Sarason \cite{MR847333} in the special case when $a(z)=(1-z)/2$ (see also \cite[Corollary 28.29]{MR3617311}). 

\begin{Proposition}\label{8sd2lsewh}
Suppose $a \in \A$. Then 
$\Mult(\overline{a}) = \MM(\overline{a}) \cap H^{\infty}.$
\end{Proposition}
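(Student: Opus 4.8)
The inclusion $\Mult(\overline{a}) \subset \MM(\overline{a}) \cap H^\infty$ has already been recorded above, so the plan is to establish only the reverse inclusion: given $\phi \in \MM(\overline{a}) \cap H^\infty$, I would show that $\phi f \in \MM(\overline{a})$ for every $f \in \MM(\overline{a})$. The whole argument runs on the concrete description \eqref{4938oryehdfgfe}, namely $\MM(\overline{a}) = a H^2 \dotplus \P_{N-1}$, together with two of its consequences: that $a H^2 \subset \MM(\overline{a})$, and that \emph{every} polynomial lies in $\MM(\overline{a})$ (divide the polynomial by $a$ to obtain a quotient in $H^2$ and a remainder in $\P_{N-1}$).

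First I would put both functions into the canonical form coming from \eqref{4938oryehdfgfe}: write $\phi = a g + p$ and $f = a h + q$ with $g, h \in H^2$ and $p, q \in \P_{N-1}$. The naive move is to expand the product $\phi f$ fully, but this is precisely the step to avoid, since the cross term $a^2 g h$ involves the product $g h$ of two generic $H^2$ functions, which a priori lies only in $H^1$; one then cannot conclude $a^2 g h \in a H^2$. Instead I would keep one copy of $\phi$ intact as a bounded multiplier and write $\phi f = a(\phi h) + \phi q$.

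The first summand is then immediate: because $\phi \in H^\infty$ and $h \in H^2$, we have $\phi h \in H^2$, so $a(\phi h) \in a H^2 \subset \MM(\overline{a})$. For the second summand I would bring in the decomposition of $\phi$ and write $\phi q = a(g q) + p q$. Here $g q \in H^2$, as a product of an $H^2$ function and a polynomial, so $a(g q) \in a H^2 \subset \MM(\overline{a})$, while $p q$ is a polynomial and hence belongs to $\MM(\overline{a})$ as well. Summing, $\phi f \in \MM(\overline{a})$, which shows $\phi \in \Mult(\overline{a})$ and finishes the proof.

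The only real subtlety—and the step I expect to be the main obstacle—is the one flagged in the second paragraph: one must resist the symmetric expansion of $\phi f$, which manufactures a product of two arbitrary $H^2$ factors lying merely in $H^1$, and instead use the hypothesis $\phi \in H^\infty$ to keep the dangerous factor inside the $H^\infty$-times-$H^2$ regime. After that, the verification is pure bookkeeping: products of polynomials are polynomials, $H^2$ functions times polynomials stay in $H^2$, and every piece lands in $a H^2 \dotplus \P_{N-1} = \MM(\overline{a})$ by \eqref{4938oryehdfgfe}.
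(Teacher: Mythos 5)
Your proof is correct and follows essentially the same route as the paper: decompose both functions via \eqref{4938oryehdfgfe} and check that every piece of the product lands in $aH^2 \dotplus \P_{N-1}$. One remark: the full expansion you warn against is in fact exactly what the paper does, and it causes no trouble, because $ag = \phi - p$ is bounded (the difference of $\phi \in H^\infty$ and a polynomial), so the cross term $a^2 gh = a\cdot(ag)\cdot h$ lies in $a\cdot H^\infty\cdot H^2 \subset aH^2$. Your regrouping $\phi f = a(\phi h) + \phi q$ sidesteps that observation but exploits the hypothesis $\phi\in H^\infty$ in the same way, so the two arguments are only cosmetically different.
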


\begin{proof}
From the previous paragraph, we have the $\subset$ containment. 

Now suppose that $\phi \in \MM(\overline{a}) \cap H^{\infty}$. By \eqref{4938oryehdfgfe} 
$$\phi = a \widetilde{\phi} + p, \quad \widetilde{\phi} \in H^2, p \in \mathscr{P}_{N - 1}, N = \mbox{deg}(a).$$
This implies that $a \widetilde{\phi} = \phi - p \in H^{\infty}$. If $f \in \MM(\overline{a})$ then, again by \eqref{4938oryehdfgfe}, 
$$f = a \widetilde{f} + q, \quad \widetilde{f} \in H^2, q \in \mathscr{P}_{N - 1},$$ and so 
$$\phi f = (a \widetilde{\phi} + p) ( a \widetilde{f} + q) = a (a \widetilde{\phi} \widetilde{f} + p \widetilde{f}  + q \widetilde{\phi}) + p q.$$
We have already shown that $a \widetilde{\phi} \in H^{\infty}$ and thus $a \widetilde{\phi} \widetilde{f} \in H^2$. Clearly the terms $p \widetilde{f}$ and $q \widetilde{\phi}$ belong to $H^2$ and so, using \eqref{4938oryehdfgfe}, 
$$a (\widetilde{\phi} \widetilde{f} + p \widetilde{f}  + q \widetilde{\phi}) \in a H^2 \subset \MM(\overline{a}).$$
Since $\MM(\overline{a})$ contains the polynomials, we have $p q \in \MM(\overline{a})$ and thus $\phi f \in \MM(\overline{a})$. Hence $\phi \in \Mult(\overline{a})$ and the $\supseteq$ inclusion follows. 
\end{proof}

\begin{Remark}\label{q98yrgeouiwergfh}
\begin{enumerate}
\item Proposition \ref{8sd2lsewh} says that for $a \in \mathscr{A}$, the set $\MM(\overline{a})\cap H^\infty$ is an algebra (since it is equal to the multiplier algebra $\Mult(\overline{a})$). For general $a \in H^{\infty}$ we do not always have $\Mult(\overline{a}) = \MM(\overline{a}) \cap H^{\infty}$  since there are $a \in H^{\infty}$  such that $\MM(\overline{a})\cap H^{\infty}$ is {\em not} an algebra \cite{MR1614726}.
 \item For a general bounded outer function $a$ we have $H^{\infty} \subset \Mult(\overline{a})$  if and only if the Toeplitz  operator $T_{a/\bar a}$ is invertible \cite[Theorem 17.20]{MR3617311}. In this case we in fact have $\MM(a)=\MM(\overline{a})$ and $\Mult(\overline{a})=H^\infty$. 
\item In \cite{MR1254125} they obtained the following characterization of $\Mult(\overline{a})$ for a general bounded outer function $a$: Let $\varphi\in \MM(\overline{a})\cap H^\infty$ and let $\psi\in H^2$ such that $\varphi=T_{\bar a}\psi$. Then the following are equivalent: (i) $\varphi \in \Mult(\overline{a})$; (ii) the operator $H^*_{\bar\psi}H_{\bar a}$ is bounded on $H^2$ ($H_{\overline{\psi}}$ and $H_{\overline{a}}$ are Hankel operators). This result is difficult to check, even when $a \in \mathscr{A}$, which makes Proposition \ref{8sd2lsewh} all the more useful.
\end{enumerate}
\end{Remark}

The rest of this section contains the proofs of Theorem \ref{ydayd1818347} and Theorem \ref{yyysatta6666}. First observe that $\Mult(\overline{a_1}, \overline{a_2})$ is never trivial. 

\begin{Proposition}
For any $a_1, a_2 \in \A$, $a_2 H^{\infty} \subset \Mult(\overline{a_1}, \overline{a_2}).$
\end{Proposition}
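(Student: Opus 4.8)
The plan is to unravel the definition of the multiplier space and exploit two facts already recorded in the excerpt: that $\MM(\overline{a_1})$ is contained in $H^2$ (it is the range $T_{\overline{a_1}} H^2$, and more generally a sub-Hardy space is contractively embedded in $H^2$), and that $a_2 H^2 \subset \MM(\overline{a_2})$ for any $a_2 \in H^{\infty}$ (the observation following Proposition \ref{ppppopOO}, which comes from the identity $T_{a_2} = T_{\overline{a_2}} T_{a_2/\overline{a_2}}$). Fix $\phi \in a_2 H^{\infty}$ and write $\phi = a_2 g$ with $g \in H^{\infty}$. The goal is to show $\phi f \in \MM(\overline{a_2})$ for every $f \in \MM(\overline{a_1})$.

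First I would note that $f \in \MM(\overline{a_1}) \subset H^{2}$, so that $g f \in H^2$ because $g \in H^{\infty}$ acts as a bounded multiplication operator on $H^2$. Then
$$\phi f = a_2 g f = a_2 (g f) \in a_2 H^2 \subset \MM(\overline{a_2}),$$
which is exactly the required conclusion. Since $f \in \MM(\overline{a_1})$ was arbitrary, $\phi \in \Mult(\overline{a_1}, \overline{a_2})$, and this gives the claimed inclusion $a_2 H^{\infty} \subset \Mult(\overline{a_1}, \overline{a_2})$.

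I expect no genuine obstacle here: the argument needs only that $\MM(\overline{a_1})$ sits inside $H^2$ and that multiplication by $a_2$ carries $H^2$ into the distinguished subspace $a_2 H^2$ of $\MM(\overline{a_2})$. If one prefers to make the membership $g f \in H^2$ fully explicit, one can invoke the decomposition \eqref{4938oryehdfgfe}: writing $f = a_1 \widetilde{f} + q$ with $\widetilde{f} \in H^2$ and $q \in \P_{N_1 - 1}$ (where $N_1 = \deg a_1$), one gets $g f = g a_1 \widetilde{f} + g q \in H^2$ since $g a_1 \in H^{\infty}$ and $g q \in H^{\infty}$. This is the same reasoning that underlies Proposition \ref{8sd2lsewh}, but here it is lighter, since there is no polynomial remainder to track: multiplication by $a_2$ already pushes the entire product into $a_2 H^2$, bypassing the need to separately place the $\P_{N-1}$ part.
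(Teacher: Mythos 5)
Your proof is correct and follows essentially the same route as the paper's: both arguments reduce to showing that $\phi f$ lands in $a_2 H^2 \subset \MM(\overline{a_2})$. The only difference is cosmetic --- the paper first decomposes $f = a_1 \widetilde{f} + p$ via \eqref{4938oryehdfgfe} before concluding, whereas you observe directly that $f \in \MM(\overline{a_1}) \subset H^2$, which slightly streamlines the same idea.
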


\begin{proof}
Let $\phi \in H^{\infty}$. Then for any $f \in \MM(\overline{a_1})$ we can use \eqref{4938oryehdfgfe} to see that 
$$f =  a_1 \widetilde{f} + p, \quad \widetilde{f} \in H^2, p \in \mathscr{P}_{N_1 - 1}, N_1 = \mbox{deg}(a_1).$$
Thus 
$$(a_2 \phi) f = a_2 \phi (a_1 \widetilde{f} + p) = a_2 (a_1 \phi \widetilde{f} + \phi p) \in a_2 H^2 \subset \MM(\overline{a_2}). \qedhere$$
\end{proof}


\begin{proof}[Proof of Theorem \ref{ydayd1818347}]
We remind that  $1 \in \MM(\overline{a_1})$. This means that if $\phi \in \Mult(\overline{a_1}, \overline{a_2})$ then $\phi \MM(\overline{a_1}) \subset \MM(\overline{a_2})$ and so $\phi =  \phi \cdot 1 \in \MM(\overline{a_2})$. By 
\eqref{4938oryehdfgfe},
\begin{equation}\label{cxncx8asmsd0dhyy}
\phi = a_2 \widetilde{\phi} + p, \quad \widetilde{\phi} \in H^2, p \in \mathscr{P}_{N_2 - 1}, N_2 = \mbox{deg}(a_2).
\end{equation}
With $h := a_1/a_2$, which we assume to belong to $H^{\infty}$, observe that 
$$a_1 \widetilde{\phi} = \frac{\phi - p}{a_2} a_1 = h (\phi - p)$$ and so 
$$a_1 \widetilde{\phi} \in H^{\infty} \iff h \phi \in H^{\infty}.$$ Thus 
$$\{\phi \in \MM(\overline{a_2}): h \phi \in H^{\infty}\} = \{\phi \in \MM(\overline{a_2}): \widetilde{\phi} a_1 \in H^{\infty}\}.$$
To complete the proof, we will now prove that 
$$\Mult(\overline{a_1}, \overline{a_2}) = \{\phi \in \MM(\overline{a_2}): \widetilde{\phi} a_1 \in H^{\infty}\}.$$

($\subset$): Let $\phi \in \Mult(\overline{a_1}, \overline{a_2})$ and recall from \eqref{cxncx8asmsd0dhyy} that 
$\phi = a_2 \widetilde{\phi} + p$. We will show that $a_1 \widetilde{\phi} \in H^{\infty}$ by showing that $a_1 \widetilde{\phi}$ is a multiplier from $H^2$ to itself. Indeed let $\widetilde{f} \in H^2$ and define $f = a_1 \widetilde{f}$. From \eqref{4938oryehdfgfe}, $f$ belongs to $\MM(\overline{a_1})$. By our assumption that $\phi$ is a multiplier, $\phi f \in \MM(\overline{a_2})$. Moreover, 
$$\phi f = (a_2 \widetilde{\phi} + p) (a_1 \widetilde{f}) = a_1 a_2 \widetilde{\phi} \widetilde{f} + p a_1 \widetilde{f}.$$
For the second summand above, observe that 
$$p a_1 \widetilde{f}  = p a_2 h \widetilde{f}  \in a_2 H^2 \subset \MM(\overline{a_2}).$$
Since $\phi f \in \MM(\overline{a_2})$ by assumption, it must be the case that the first summand, i.e., $a_1 a_2 \widetilde{\phi} \widetilde{f}$ belongs to  $\MM(\overline{a_2})$ and so 
$$a_1 a_2 \widetilde{\phi} \widetilde{f} = a_2 \widetilde{F} + R, \quad \widetilde{F} \in H^2, R \in \mathscr{P}_{N_2 - 1}.$$
This means that 
$$a_1 \widetilde{\phi} \widetilde{f} = \widetilde{F} + \frac{R}{a_2}.$$
Clearly $\widetilde{F} \in H^2$ and  $a_1 \widetilde{\phi} \widetilde{f} \in H^1$. Thus, since $R/a_2 \in H^1$ and rational, it follows that $R/a_2 \in H^{\infty}$. 
In summary,
 $$a_1 \widetilde{\phi} \widetilde{f} \in H^2, \quad \widetilde{f} \in H^2,$$ which makes $a_1 \widetilde{\phi}$ a multiplier of $H^2$ and hence bounded. 
 
 ($\supseteq$): Let $f \in \MM(\overline{a_1})$. Then by \eqref{4938oryehdfgfe} 
 $$f = a_1 \widetilde{f} + p, \quad \widetilde{f} \in H^2, p \in \mathscr{P}_{N_1 - 1}, N_1 = \mbox{deg}(a_1).$$
  If $\phi= a_2 \widetilde{\phi} + q \in \MM(\overline{a_2})$ with $a_1 \widetilde{\phi} \in H^{\infty}$ then  
$$\phi f = (a_2 \widetilde{\phi} + q) (a_1 \widetilde{f} + p) = a_2 (a_1 \widetilde{\phi} \widetilde{f} + \widetilde{\phi} p + h \widetilde{f} q) + q p.$$
For the first summand above, observe that, by assumption $a_1 \widetilde{\phi} \in H^{\infty}$, and also that 
$\widetilde{\phi}p$ and $h \widetilde{f} q$ belong to $H^2$ and so the first summand belongs to $a_2 H^2 \subset \MM(\overline{a_2})$. Since $\MM(\overline{a_2})$ contains all the polynomials, $pq \in \MM(\overline{a_2})$. Thus $\phi \in \Mult(\overline{a_1}, \overline{a_2})$. 
\end{proof}

\begin{Corollary}
For $a \in \A$, 
$\Mult(\overline{a}, 1) = \{\phi \in H^2: a \phi \in H^{\infty}\}.$
\end{Corollary}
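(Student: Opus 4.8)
The plan is to recognize this corollary as nothing more than the $a_2 = 1$ instance of Theorem~\ref{ydayd1818347}, so the entire task reduces to justifying that specialization. First I would observe that the constant function $1$ belongs to $\A$: it is the polynomial \eqref{classA} attached to the empty product, i.e. the degenerate case $N = 0$ in which the symbol has no zeros on $\T$. Next I would pin down the target space: since $a_2 = 1$ is outer, the operator $T_{\overline{1}}$ is the identity on $H^2$, so by definition $\MM(\overline{1}) = T_{\overline{1}} H^2 = H^2$, and the range norm \eqref{Tzznorm} collapses to the ordinary $H^2$ norm. (Recall also that $\overline{1} = 1$, which is why the corollary writes $\Mult(\overline{a}, 1)$.)

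With those two identifications, every hypothesis of Theorem~\ref{ydayd1818347} holds for $a_1 = a$ and $a_2 = 1$: both symbols lie in $\A$, and the quotient $h = a_1/a_2 = a$ is a polynomial and hence belongs to $H^\infty$. Quoting the theorem verbatim then gives
$$\Mult(\overline{a}, 1) = \{\phi \in \MM(\overline{1}) : h\phi \in H^\infty\} = \{\phi \in H^2 : a\phi \in H^\infty\},$$
which is exactly the claimed equality, so no independent argument is needed.

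There is no substantive obstacle here; the single point deserving a moment's attention is the boundary behavior $N_2 = \deg(a_2) = 0$ as it feeds into the proof of Theorem~\ref{ydayd1818347}. In that proof the remainder lives in $\mathscr{P}_{N_2 - 1} = \mathscr{P}_{-1}$, which must be read as $\{0\}$, so the decomposition \eqref{4938oryehdfgfe} for $a_2 = 1$ degenerates to $\phi = \widetilde{\phi} \in H^2$. This is consistent with the identification $\MM(\overline{1}) = H^2$ recorded above, and with this convention in place each line of the referenced argument survives the specialization unchanged.
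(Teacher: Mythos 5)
Your proposal is correct and matches the paper's intent exactly: the corollary is stated without proof immediately after Theorem~\ref{ydayd1818347} precisely because it is the specialization $a_1 = a$, $a_2 = 1$, with $\MM(\overline{1}) = H^2$ and $h = a$. Your extra care in checking that $1 \in \A$ as the empty product ($N=0$, so $\mathscr{P}_{N-1} = \{0\}$ in \eqref{4938oryehdfgfe}) is a sensible and accurate verification of the degenerate case.
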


Notice the above characterizes the multipliers from $\MM(\overline{a})$ (the smaller space) to $H^2$ (the bigger space). 


From Proposition \ref{8sd2lsewh}, the multipliers from $\MM(\overline{a})$ to {\em itself} must be bounded functions. However, as the following example shows, for {\em different} $a_1, a_2 \in \mathscr{A}$, it is possible for $\Mult(\overline{a_1}, \overline{a_2})$ to contain unbounded functions.  

\begin{Example}
If $a_1(z) = (1 + z) (1 - z)$ and $a_2(z) = (1 + z)$, then the unbounded function 
$$\phi(z) = (1 + z)^{1/2 + \epsilon} (1 - z)^{-1/2 + \epsilon}, \quad \epsilon \in (0, \tfrac{1}{2}),$$ 
belongs to $\Mult(\overline{a_1}, \overline{a_2})$. 
To see this observe that $\phi \in a_2 H^2 \subset \MM(\overline{a_2})$ and 
$$\frac{a_1}{a_2} \phi = (1 - z) \phi = (1 + z)^{1/2 + \epsilon} (1 - z)^{1/2 + \epsilon} \in H^{\infty}.$$
Now apply Theorem \ref{ydayd1818347}.

The proof of Theorem \ref{yyysatta6666} is more involved and needs a little bit more set-up. For 
$$a = \prod_{j = 1}^{N} (z - \zeta_j), \quad \zeta_j \in \T, $$
we have seen from \eqref{4938oryehdfgfe} the decomposition
$$\MM(\overline{a}) =  a H^2 \dotplus \P_{N - 1}.$$
Since $\P_{N-1}$ is a finite dimensional space, it follows that $aH^2$ is a closed subspace of $\MM(\overline{a})$ and standard functional analysis arguments (see for instance \cite[Theorem 5.16]{MR1157815}) show that the projection with range $aH^2$ and null space $\P_{N-1}$ is continuous and we have
\begin{equation}\label{xxx22625qwrlkncv<<}
\|a f\|_{\overline{a}} \asymp \|f\|_{H^2}, \quad f \in H^2.
\end{equation}
With this set up we are now ready to prove Theorem \ref{yyysatta6666}.

\end{Example}



\begin{proof}[Proof of Theorem \ref{yyysatta6666}]
By assumption, $k = a_2/a_1 \in H^{\infty}$ We first show that  $k \in \Mult(\overline{a_1}, \overline{a_2})$. Indeed if $f = a_1 \widetilde{f} + p \in \MM(\overline{a_1})$, then 
$$k f = \frac{a_2}{a_1} (a_1 \widetilde{f} + p) = a_2 \widetilde{f} + p \frac{a_2}{a_1}.$$
The first term  belongs to $a_2 H^2 \subset \MM(\overline{a_2})$ while the second term is analytic in a neighborhood of $\overline{\D}$ (since it is a rational function in $H^2$) and hence belongs to $\MM(\overline{a_2})$. Thus $k \in \Mult(\overline{a_1}, \overline{a_2})$.

Next observe, from the inclusion $\MM(\overline{a_2}) \subset \MM(\overline{a_1})$ that 
\begin{equation}\label{wewfjsdf77373}
\Mult(\overline{a_1}, \overline{a_2}) \subset \Mult(\overline{a_1}).
\end{equation} We are now ready to prove 
$$\Mult(\overline{a_1}, \overline{a_2}) = k (\MM(\overline{a_1}) \cap H^{\infty}).$$

($\supseteq$): Let $\phi \in \MM(\overline{a_1}) \cap H^{\infty}$ and $f \in \MM(\overline{a_1})$. By Proposition \ref{8sd2lsewh}, $\phi \in \Mult(\overline{a_1}, \overline{a_1})$ and thus $\phi f \in \MM(\overline{a_1})$. We argued before that $k \in \Mult(\overline{a_1}, \overline{a_2})$ and so $k \phi f \in \MM(\overline{a_2})$. Hence $k \phi \in \Mult(\overline{a_1}, \overline{a_2})$. 

($\subset$): Let $\phi \in \Mult(\overline{a_1}, \overline{a_2})$. Since $1 \in \MM(\overline{a_1})$ we see that $\phi \in \MM(\overline{a_2})$ and hence 
\begin{equation}\label{oooiiiuwuwuuw}
\phi = a_2 \widetilde{\phi} + p = k a_1 \widetilde{\phi} + p, \quad \mbox{deg}(p) \leq N_2 - 1.
\end{equation}
 Our first step is to show that $\phi \in k \MM(\overline{a_1})$ and to do this, we need to show that $p/k$ is a polynomial. To this end, let $h\in H^2$ and put $f=a_1h$. Then 
\begin{equation}\label{0009987}
\phi f = a_2 a_1 \widetilde{\phi} h+ p a_1 h.
\end{equation}
 However, $\phi f \in \MM(\overline{a_2})$ (since $\phi \in \Mult(\overline{a_1}, \overline{a_2})$ and $f\in a_1H^2\subset\MM(\overline{a_1})$) and so 
 \begin{equation}\label{886d6sd6dsf6}
\phi f = a_2 \widetilde{\phi f} + t, \quad \mbox{deg} (t) \leq N_2 - 1
\end{equation}
for some $\widetilde{\phi f}  \in H^2$. 
A calculation using \eqref{0009987} and \eqref{886d6sd6dsf6} yields 
$$
a_1 p h=a_2(\widetilde{\phi f}-a_1\widetilde{\phi}h)+t.
$$
Let $g = \widetilde{\phi f} - a_1 \widetilde{\phi} h $ and observe that $g \in H^1$. Using the division algorithm for polynomials we get 
$$t = a_1 \gamma + \delta, \quad \mbox{deg}(\delta) \leq N_1 - 1.$$
This yields 
\begin{align*}
a_1 h p  = a_2 g + t
 = a_2 g + a_1 \gamma + \delta,
\end{align*}
that is 
$$
a_1hp=a_1(kg+\gamma)+\delta.
$$
Now observe that 
\begin{align*}
k g  = k \widetilde{\phi f} - a_1 k \widetilde{\phi}h
 = k \widetilde{\phi f} - a_2 \widetilde{\phi}h
 = k  \widetilde{\phi f} - (\phi - p) h
\end{align*}
which belongs to $H^2$ since $\phi \in H^{\infty}$ (recall \eqref{wewfjsdf77373}).  By the uniqueness of the representation \eqref{4938oryehdfgfe} and the equality $a_1hp=a_1(kg+\gamma)+\delta$, we conclude that $\delta = 0$ and $h p = k g + \gamma$. Finally recall that $g \in H^1$ and so 
$$|g(r \xi)| \lesssim \frac{1}{1 - r}.$$ Hence the function $k g + \gamma$ has a radial limit at each zero of $k$, along with its derivatives of order one less than the order of the zero of $k$. Thus the same must be true for $h p$. But $h$ was an arbitrary element of $H^2$. This means that $p$ must have a zero at every zero of $k$ of at least the multiplicity of the zero of $k$. Conclusion: $p/k$ is a polynomial $b$. 

From the above and the representation of $\phi$ from \eqref{oooiiiuwuwuuw} we know that 
$$\phi = k (a_1 \widetilde{\phi} + b), \quad \mbox{deg}(b) \leq N_1 - 1.$$
Notice in the last step how we used that $\mbox{deg}(p) \leq N_2 - 1$ and $\mbox{deg}(k) = N_2 - N_1$. Note that 
$$\phi_0 := a_1 \widetilde{\phi} + b \in \MM(\overline{a_1})$$ and so it remains to show that $\phi_0 \in H^{\infty}$. 
Since $\phi \in \Mult(\overline{a_1}, \overline{a_2})$ we know that 
\begin{equation}\label{pppqq11455}
\|\phi(a_1 h)\|_{\overline{a_2}} \lesssim \|a_1 h\|_{\overline{a_1}}, \quad h \in H^{\infty}.
\end{equation}
However, 
$$\phi a_1 h = k a_1 \phi_0 h = a_2 h \phi_0.$$
From \eqref{xxx22625qwrlkncv<<} we have 
$$\|a_1 h\|_{\overline{a_1}} \asymp \|h\|_{H^2}, \quad \|a_2 h \phi_0\|_{\overline{a_2}} \asymp \|h \phi_0\|_{H^2}.$$ Combining this with \eqref{pppqq11455} we get 
$$\|h \phi_0\|_{H^2} \lesssim \|h\|_{H^2}, \quad h \in H^{\infty}.$$ This means that the operator 
$h \mapsto \phi_0 h$, initially defined on $H^{\infty} \subset H^2$, extends to a bounded multiplication operator on $H^2$. It is well known that the multipliers of $H^2$ must be bounded and so $\phi_0 \in H^{\infty}$ which completes the proof. 
\end{proof}

\begin{Corollary}
For $a \in \A$, 
$\Mult(1, \overline{a}) = a H^{\infty}$.
\end{Corollary}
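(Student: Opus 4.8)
The plan is to recognize this statement as the special case $a_1 = 1$ of Theorem \ref{yyysatta6666}, so that essentially no new work is required. First I would record the two elementary identifications that make the reduction legitimate. On the one hand, the constant function $1$ belongs to the class $\mathscr{A}$: it is the empty product ($N = 0$) in the defining formula \eqref{classA}, having no zeros on $\T$. On the other hand, $\MM(\overline{1}) = H^2$, since $T_{\overline{1}} = T_1 = I$ is the identity operator on $H^2$, so its range is all of $H^2$ and, by \eqref{Tzznorm}, the range norm $\|\cdot\|_{\overline{1}}$ coincides with the $H^2$ norm.

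With these in hand, I would set $a_1 = 1$ and $a_2 = a$ and verify the single hypothesis of Theorem \ref{yyysatta6666}, namely that $k := a_2/a_1 = a \in H^{\infty}$; this holds trivially because $a$ is a polynomial. This places us exactly in the framework of that theorem, whose conclusion reads
$$\Mult(\overline{1}, \overline{a}) = k\bigl(\MM(\overline{1}) \cap H^{\infty}\bigr) = a\bigl(H^2 \cap H^{\infty}\bigr) = a H^{\infty},$$
where the last equality uses $H^2 \cap H^{\infty} = H^{\infty}$. This yields the desired description directly.

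I do not anticipate any genuine obstacle, as the statement is a clean corollary. The only point meriting a moment's care is that Theorem \ref{yyysatta6666} is phrased for $a_1, a_2 \in \mathscr{A}$, so one should confirm that the degenerate instance $a_1 = 1$ (degree zero, $N_1 = 0$) is covered by its proof rather than merely by its statement. Tracing the argument with $N_1 = 0$, the decomposition \eqref{4938oryehdfgfe} for $\MM(\overline{a_1})$ collapses to $\MM(1) = H^2$ with no polynomial summand $\mathscr{P}_{N_1 - 1}$, the constant $1$ still lies in $\MM(\overline{a_1})$, and every invocation of \eqref{4938oryehdfgfe} and \eqref{xxx22625qwrlkncv<<} remains valid in this limiting case. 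Hence the theorem applies verbatim and the corollary follows.
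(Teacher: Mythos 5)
Your proposal is correct and is exactly the route the paper intends: the corollary is stated immediately after Theorem \ref{yyysatta6666} with no separate proof, the specialization being $a_1 = 1 \in \mathscr{A}$ (the empty product, $N_1 = 0$), $a_2 = a$, $k = a$, and $\MM(\overline{1}) = H^2$, giving $\Mult(1, \overline{a}) = a(H^2 \cap H^{\infty}) = aH^{\infty}$. Your extra check that the proof of the theorem survives the degenerate case $N_1 = 0$ (where $\mathscr{P}_{N_1-1}$ is trivial and the remainder $\delta$ vanishes automatically) is a sensible and correct precaution, not a deviation.
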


Notice how the above corollary characterizes the multipliers between $H^2$ (the bigger space) and $\MM(\overline{a})$ (the smaller space). 

\section{Onto multipliers}

Crofoot \cite{Crofoot} studied the onto multipliers between model spaces. Here we discuss the onto multipliers between $\MM(\overline{a_1})$ and $\MM(\overline{a_2})$, i.e., $\phi \in \mathscr{O}(\D)$ for which $\phi \MM(\overline{a_1}) = \MM(\overline{a_2})$. 

\begin{Theorem}\label{ppsdfusd7sdfbbvxxz}
Suppose $a_1, a_2 \in \mathscr{A}$ with $a_2/a_1 = h \in H^{\infty} \setminus \C$. Then there are no multipliers from $\MM(\overline{a_1})$ onto $\MM(\overline{a_2})$. 
\end{Theorem}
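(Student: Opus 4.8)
The plan is to combine the factorization of multipliers from Theorem \ref{yyysatta6666} with the finite-dimensional polynomial summands appearing in \eqref{4938oryehdfgfe}. It suffices to show that for \emph{every} $\phi \in \Mult(\overline{a_1}, \overline{a_2})$ the image $\phi\,\MM(\overline{a_1})$ is a proper subset of $\MM(\overline{a_2})$, since an onto multiplier would in particular be a multiplier whose image is all of $\MM(\overline{a_2})$. First I would record that $h$ is a nonconstant polynomial: as $a_1,a_2 \in \mathscr{A}$ have all their zeros on $\mathbb{T}$ and $h=a_2/a_1 \in H^\infty$, the rational function $h$ can have no poles in $\overline{\D}$, so $a_1 \mid a_2$ and $h$ is a polynomial with $\deg h = N_2-N_1 \ge 1$, where $N_j := \deg a_j$. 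In particular $N_1 < N_2$, and this strict inequality is the crux of the obstruction.

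Next, given $\phi \in \Mult(\overline{a_1}, \overline{a_2})$, Theorem \ref{yyysatta6666} (which applies since $a_2/a_1 = h \in H^\infty$) writes $\phi = h\phi_0$ with $\phi_0 \in \MM(\overline{a_1}) \cap H^\infty$. By Proposition \ref{8sd2lsewh} we have $\phi_0 \in \Mult(\overline{a_1})$, hence $\phi_0\,\MM(\overline{a_1}) \subset \MM(\overline{a_1})$, and therefore
$$\phi\,\MM(\overline{a_1}) = h\bigl(\phi_0\,\MM(\overline{a_1})\bigr) \subset h\,\MM(\overline{a_1}).$$
So the image of any multiplier is confined to $h\,\MM(\overline{a_1})$, and it remains to see that the latter is properly contained in $\MM(\overline{a_2})$.

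Finally I would multiply the decomposition \eqref{4938oryehdfgfe} for $a_1$ by $h$ and use $ha_1 = a_2$ to obtain
$$h\,\MM(\overline{a_1}) = a_2 H^2 \dotplus h\,\P_{N_1-1},$$
the sum staying direct because $h\,\P_{N_1-1} \subset \P_{N_2-1}$ and $a_2 H^2 \cap \P_{N_2-1} = \{0\}$ (the directness in \eqref{4938oryehdfgfe} for $a_2$). Against $\MM(\overline{a_2}) = a_2 H^2 \dotplus \P_{N_2-1}$ the two spaces differ only in their polynomial parts, and since multiplication by the nonzero polynomial $h$ is injective on $\P_{N_1-1}$ we get $\dim h\,\P_{N_1-1} = N_1 < N_2 = \dim \P_{N_2-1}$, so $h\,\P_{N_1-1} \subsetneq \P_{N_2-1}$. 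Any $q \in \P_{N_2-1} \setminus h\,\P_{N_1-1}$ then lies in $\MM(\overline{a_2}) \setminus h\,\MM(\overline{a_1})$: a relation $q = a_2 g + hp$ with $g \in H^2$ and $p \in \P_{N_1-1}$ would force $a_2 g = q - hp \in a_2 H^2 \cap \P_{N_2-1} = \{0\}$, hence $q = hp \in h\,\P_{N_1-1}$, a contradiction. This gives $\phi\,\MM(\overline{a_1}) \subset h\,\MM(\overline{a_1}) \subsetneq \MM(\overline{a_2})$, so no $\phi$ can be onto.

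I expect the only point needing care to be keeping the direct sums honest: verifying $a_2 H^2 \cap \P_{N_2-1} = \{0\}$ and confirming that it is precisely the polynomial summands that separate $h\,\MM(\overline{a_1})$ from $\MM(\overline{a_2})$. Once that is in place the argument is a short dimension count powered by $N_1 < N_2$; the genuine mathematical input is the factorization $\phi = h\phi_0$ supplied by Theorem \ref{yyysatta6666}, which forces every multiplier to carry the whole space into the strictly smaller range $h\,\MM(\overline{a_1})$.
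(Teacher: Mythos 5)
Your argument is correct, but its second half takes a genuinely different route from the paper's. Both proofs rest on the same key input, Theorem \ref{yyysatta6666}, which writes any $\phi \in \Mult(\overline{a_1},\overline{a_2})$ as $\phi = h\psi$ with $\psi \in \MM(\overline{a_1}) \cap H^{\infty}$. From there the paper simply tests surjectivity against the constant function $1$: if $\phi g = 1$ for some $g \in \MM(\overline{a_1}) \subset H^2$, then $1/h = \psi\cdot(1/\phi) = \psi g \in H^2$, which is impossible because $1/h$ is a nonconstant rational function with poles on $\T$. You instead bring in Proposition \ref{8sd2lsewh} to get $\psi\,\MM(\overline{a_1}) \subset \MM(\overline{a_1})$, confine the image of \emph{every} multiplier to $h\,\MM(\overline{a_1}) = a_2 H^2 \dotplus h\,\P_{N_1-1}$, and finish with the dimension count $\dim h\,\P_{N_1-1} = N_1 < N_2 = \dim \P_{N_2-1}$. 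Your version is longer and purely linear-algebraic where the paper's is function-theoretic, and it buys a bit more: it exhibits a concrete subspace of algebraic codimension $N_2 - N_1$ in $\MM(\overline{a_2})$ that contains the image of every multiplier, rather than merely ruling out surjectivity. The one step you flagged as needing care, $a_2 H^2 \cap \P_{N_2-1} = \{0\}$, is exactly the directness of the sum asserted in \eqref{4938oryehdfgfe}, so nothing is missing.
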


\begin{proof}
Suppose there is a $\phi \in H^2$ with $\phi \MM(\overline{a_1}) = \MM(\overline{a_2})$. By Theorem \ref{yyysatta6666} there is a $\psi \in H^{\infty}$ so that $\phi = h \psi$. But since $1 \in \MM(\overline{a_2})$ there is a $g \in \MM(\overline{a_1})$ such that $1 = \phi g$. Thus $1/\phi \in H^2$ and hence $\psi/\phi  = 1/h \in H^2$. However, $1/h$ is a non-constant rational function with poles on $\T$ and thus can not belong to $H^2$ -- which yields a contradiction. 
\end{proof}

\begin{Corollary}\label{bsudfysdfiusyf}
Suppose $a_1, a_2 \in \mathscr{A}$ with $a_1/a_2 \in H^{\infty} \setminus \C$. Then there are no multipliers from $\MM(\overline{a_1})$ onto $\MM(\overline{a_2})$. 
\end{Corollary}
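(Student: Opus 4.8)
The plan is to deduce this from Theorem \ref{ppsdfusd7sdfbbvxxz} by an inversion argument, exploiting that the hypotheses of the two statements are interchanged when the roles of $a_1$ and $a_2$ are swapped. Theorem \ref{ppsdfusd7sdfbbvxxz} forbids onto multipliers $\MM(\overline{b_1}) \to \MM(\overline{b_2})$ whenever $b_2/b_1 \in H^{\infty} \setminus \C$; taking $b_1 = a_2$ and $b_2 = a_1$ turns the hypothesis into $a_1/a_2 \in H^{\infty} \setminus \C$, which is exactly our assumption, and the conclusion into: there is no onto multiplier $\MM(\overline{a_2}) \to \MM(\overline{a_1})$. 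So it suffices to show that an onto multiplier in one direction manufactures an onto multiplier in the other, and then to invoke the theorem with the labels exchanged.

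First I would argue by contradiction, assuming $\phi \in \mathscr{O}(\D)$ with $\phi \MM(\overline{a_1}) = \MM(\overline{a_2})$. The crucial observation is that $\phi$ must be zero-free on $\D$: since $1 \in \MM(\overline{a_2})$ lies in the range, there is $g \in \MM(\overline{a_1})$ with $\phi g = 1$, so $\phi$ cannot vanish anywhere in $\D$, and $1/\phi = g$ is analytic on $\D$ (in fact $1/\phi \in \MM(\overline{a_1})$). This is the heart of the matter and the step most easily overlooked; it is the only genuine obstacle, and everything afterward is formal bookkeeping, which is why the statement is a corollary rather than a theorem.

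Next I would verify that $1/\phi$ is an onto multiplier from $\MM(\overline{a_2})$ to $\MM(\overline{a_1})$. Given $\psi \in \MM(\overline{a_2})$, surjectivity of $\phi$ provides $f \in \MM(\overline{a_1})$ with $\phi f = \psi$, whence $(1/\phi)\psi = f \in \MM(\overline{a_1})$; thus $1/\phi$ maps $\MM(\overline{a_2})$ into $\MM(\overline{a_1})$. Conversely, every $f \in \MM(\overline{a_1})$ equals $(1/\phi)(\phi f)$ with $\phi f \in \MM(\overline{a_2})$, so the map is onto. (The implicit injectivity of $\phi$ is automatic since $\phi \not\equiv 0$.) Hence $(1/\phi)\MM(\overline{a_2}) = \MM(\overline{a_1})$.

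Finally, applying Theorem \ref{ppsdfusd7sdfbbvxxz} with $a_1$ and $a_2$ interchanged — legitimate because $a_1/a_2 \in H^{\infty} \setminus \C$ — shows that no such onto multiplier $\MM(\overline{a_2}) \to \MM(\overline{a_1})$ can exist, contradicting the previous paragraph. Therefore no onto multiplier from $\MM(\overline{a_1})$ to $\MM(\overline{a_2})$ exists. I note that one could instead mimic the direct computation in the proof of Theorem \ref{ppsdfusd7sdfbbvxxz}, using Theorem \ref{yyysatta6666} (with roles swapped) to factor $1/\phi = h\psi$ with $h = a_1/a_2$ and $\psi \in \MM(\overline{a_2}) \cap H^{\infty}$, and then deriving $1/h = a_2/a_1 \in H^2$, which is impossible since $a_2/a_1$ is a nonconstant rational function with poles on $\T$; but the inversion route above is shorter and makes the symmetry with Theorem \ref{ppsdfusd7sdfbbvxxz} transparent.
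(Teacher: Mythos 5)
Your proposal is correct and follows the same route as the paper: assume an onto multiplier $\phi$ exists, observe that $1/\phi$ is then an onto multiplier from $\MM(\overline{a_2})$ onto $\MM(\overline{a_1})$, and apply Theorem \ref{ppsdfusd7sdfbbvxxz} with the roles of $a_1$ and $a_2$ exchanged to reach a contradiction. You simply supply more detail (the zero-freeness of $\phi$ and the verification that $1/\phi$ is onto) than the paper's terse two-line argument.
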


\begin{proof}
Suppose there is a $\phi \in H^2$ with $\phi \MM(\overline{a_1}) = \MM(\overline{a_2})$. Then 
$\frac{1}{\phi} \MM(\overline{a_2}) = \MM(\overline{a_1})$.
Apply Theorem \ref{ppsdfusd7sdfbbvxxz} to obtain a contradiction. 
\end{proof}

When $a_1 = a_2$, there are indeed plenty of onto multipliers.  

\begin{Proposition}
If $a \in H^\infty$ and $\lambda \in \mathbb C$, $|\lambda|<\|a\|_{\infty}^{-1}$, then 
$$\frac{1}{1 - \overline{\lambda} a} \MM(\overline{a}) = \MM(\overline{a}).$$
\end{Proposition}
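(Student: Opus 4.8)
The plan is to prove the two inclusions $\frac{1}{1-\overline\lambda a}\,\MM(\overline a)\subseteq\MM(\overline a)$ and $\MM(\overline a)\subseteq\frac{1}{1-\overline\lambda a}\,\MM(\overline a)$ separately, writing $\phi:=\frac{1}{1-\overline\lambda a}$ throughout. The hypothesis $|\lambda|<\|a\|_\infty^{-1}$ guarantees $|1-\overline\lambda a|\geq 1-|\lambda|\,\|a\|_\infty>0$ on $\D$, so that both $\phi$ and $1/\phi=1-\overline\lambda a$ belong to $H^\infty$; this is exactly what makes an equality, rather than a mere inclusion, plausible.

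I would first dispose of the reverse inclusion $\MM(\overline a)\subseteq\phi\,\MM(\overline a)$, which is purely algebraic. It suffices to observe that $1-\overline\lambda a$ is a multiplier of $\MM(\overline a)$ into itself: for $f\in\MM(\overline a)\subseteq H^2$ we have $af\in aH^2\subseteq\MM(\overline a)$ (recall $T_af=T_{\overline a}T_{a/\overline a}f$, so $aH^2\subseteq\MM(\overline a)$ for every $a\in H^\infty$), whence $(1-\overline\lambda a)f=f-\overline\lambda(af)\in\MM(\overline a)$. Then, given $g\in\MM(\overline a)$, the element $f:=(1-\overline\lambda a)g$ lies in $\MM(\overline a)$ and satisfies $\phi f=g$, so that $g\in\phi\,\MM(\overline a)$.

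The forward inclusion $\phi\,\MM(\overline a)\subseteq\MM(\overline a)$ is the crux. Here I would use the Neumann expansion $\phi=\sum_{n\geq0}\overline\lambda^{\,n}a^n$, which converges in $H^\infty$ because $\|\overline\lambda a\|_\infty<1$. Fix $f\in\MM(\overline a)$. Each term lies in $\MM(\overline a)$: the $n=0$ term is $f$, and for $n\geq1$ we have $a^nf=a\,(a^{n-1}f)\in aH^2\subseteq\MM(\overline a)$. The key estimate is $\|a^nf\|_{\overline a}\leq\|a\|_\infty^{\,n}\,\|f\|_{\overline a}$, which I would obtain by chaining the contraction $\|ag\|_{\overline a}\leq\|g\|_{H^2}$ (valid since $ag=T_{\overline a}(T_{a/\overline a}g)$ and $\|T_{a/\overline a}\|\leq\|a/\overline a\|_\infty=1$) with the bounded embedding $\|f\|_{H^2}\leq\|a\|_\infty\|f\|_{\overline a}$ of $\MM(\overline a)$ into $H^2$. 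Summing the resulting geometric series $\|f\|_{\overline a}\sum_{n\geq0}(|\lambda|\,\|a\|_\infty)^n$, whose convergence is exactly the hypothesis $|\lambda|<\|a\|_\infty^{-1}$, shows that $\sum_{n\geq0}\overline\lambda^{\,n}a^nf$ converges absolutely in the complete space $\MM(\overline a)$.

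Finally I would identify this limit with $\phi f$. Since $\MM(\overline a)$ is a reproducing kernel Hilbert space of analytic functions, convergence in its norm forces pointwise convergence, and the partial sums converge pointwise to $f/(1-\overline\lambda a)=\phi f$; hence $\phi f\in\MM(\overline a)$. Combined with the reverse inclusion, this yields $\phi\,\MM(\overline a)=\MM(\overline a)$. The main obstacle, I expect, is precisely the forward inclusion: one must upgrade the formal (pointwise) identity $\phi=\sum\overline\lambda^{\,n}a^n$ to genuine convergence in the range norm, and this is where the two-sided comparison between $\|\cdot\|_{\overline a}$ and $\|\cdot\|_{H^2}$, together with the fact that multiplication by $a$ maps $H^2$ contractively into $\MM(\overline a)$, carries the argument, the radius restriction on $\lambda$ being exactly what the geometric series needs to converge.
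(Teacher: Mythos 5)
Your proof is correct, and your reverse inclusion is exactly the paper's: one checks that $1-\overline{\lambda}a$ multiplies $\MM(\overline{a})$ into itself via $(1-\overline{\lambda}a)f=f-\overline{\lambda}(af)$ and $aH^2\subseteq\MM(\overline{a})$. For the forward inclusion, however, you take a genuinely different route. The paper dispenses with any series by the single algebraic identity
\begin{equation*}
\frac{f}{1-\overline{\lambda}a}=f+a\,\frac{\overline{\lambda}f}{1-\overline{\lambda}a},
\end{equation*}
noting that $\overline{\lambda}f/(1-\overline{\lambda}a)\in H^2$ because $1/(1-\overline{\lambda}a)\in H^{\infty}$, so the second summand lies in $aH^2\subseteq\MM(\overline{a})$. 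This is your Neumann series summed in closed form after one step, and it needs nothing beyond the invertibility of $1-\overline{\lambda}a$ in $H^{\infty}$ --- no completeness of $\MM(\overline{a})$, no norm comparisons, no identification of a limit. Your argument instead expands $\phi=\sum_{n\geq 0}\overline{\lambda}^{n}a^{n}$, proves $\|a^{n}f\|_{\overline{a}}\leq\|a\|_{\infty}^{n}\|f\|_{\overline{a}}$ by alternating the contraction $\|ag\|_{\overline{a}}\leq\|g\|_{H^2}$ (from $T_a=T_{\overline{a}}T_{a/\overline{a}}$) with the embedding $\|f\|_{H^2}\leq\|a\|_{\infty}\|f\|_{\overline{a}}$, and then uses completeness plus the reproducing-kernel property to identify the sum with $\phi f$. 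All of these steps are valid. The cost is extra machinery (and, strictly speaking, the reduction to outer $a$ so that the range norm behaves as claimed, a point the paper also glosses over); the modest payoff is a quantitative bound $\|M_{\phi}\|\leq(1-|\lambda|\,\|a\|_{\infty})^{-1}$ that falls out of the geometric series, whereas the paper's softer identity-based argument works under the weaker hypothesis that $1-\overline{\lambda}a$ is merely invertible in $H^{\infty}$.
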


\begin{proof}
Fix $\lambda \in \D$, $|\lambda|<\|a\|_{\infty}^{-1}$, and $a \in H^\infty$ and note that for any $f \in \MM(\overline{a})$ we have the identity 
$$\frac{f}{1 - \overline{\lambda} a} = f + a \frac{\overline{\lambda} f}{1 - \overline{\lambda} a}$$ which belongs to $\MM(\overline{a})$ since the first term belongs to $\MM(\overline{a})$ and the second belongs to $a H^2$ which is contained in $\MM(\overline{a})$. Thus 
$$\frac{1}{1 - \overline{\lambda} a} \in \Mult(\overline{a}, \overline{a}).$$

On the other hand for $f \in \MM(\overline{a})$, we have 
$$(1 - \overline{\lambda} a) f = f - a (\overline{\lambda} f) \in \MM(\overline{a})$$
for similar reasons as before. Thus $(1 - \overline{\lambda} a) \in \Mult(\overline{a}, \overline{a})$ which completes the proof. 
\end{proof}

\begin{Question}
Is there a tractable description of all of the onto multipliers from $\MM(\overline{a})$ to itself? 
\end{Question}

\section{Intersections of multiplier spaces}
In this section, we prove Theorem~\ref{Thm:multiplier-everyMabar}. Recall that $\mathscr{F}$ denotes the set of $\psi \in H^{\infty}$ whose Fourier coefficients 
$$\widehat{\psi}(n) = \int_{0}^{2 \pi} \psi(e^{i  \theta}) e^{- i n \theta} \frac{d \theta}{2 \pi}$$
satisfy 
$$\widehat{\psi}(n) = O(e^{-c \sqrt{n}}), \quad n \geqslant 0,$$
for some $c > 0$ and let 
$$\mathscr{B} = \{\phi \in H^{\infty} \setminus\{0\}: \|\phi\|_{\infty} \leq 1, \log(1 - |\phi|) \in L^1\}$$ denote the (non-zero)  non-extreme points in the closed unit ball of $H^{\infty}$. Also recall that 
$$\Mult(\mathscr{H}(b)) = \{\phi \in \operatorname{Hol}(\D): \phi \mathscr{H}(b) \subset \mathscr{H}(b)\}$$ are the multipliers of $\mathscr{H}(b)$ to itself. 

In \cite{MR1098860} it was shown that 
 \begin{equation}\label{oosdpfisdf11}
\bigcap_{b \in \mathscr{B}}\Mult(\mathscr{H}(b) )= \mathscr{F}
\end{equation}
and in \cite{MR1065054} it was shown that 
\begin{equation}\label{pptttfffffzzz2}
\bigcap_{\phi \in H^{\infty}\setminus\{0\}} \MM(\overline{\phi}) = \mathscr{F}.
\end{equation}

\begin{proof}[Proof of Theorem \ref{Thm:multiplier-everyMabar}]
From \cite[Thm.~20.17]{MR3617311} we know that if $b \in \mathscr{B}$ and $a$ is the so-called ``Pythagorean mate'' for $b$, meaning the unique $a \in \mathscr{B}$ with $a(0) > 0$ and with $|a|^2 + |b|^2 = 1$ almost everywhere on $\T$ (such mates exist by standard Hardy space theory \cite{Duren} and the assumption that $b$ is non-extreme)
\begin{equation}\label{11nsdve5rtyghjfb}
\Mult(\mathscr{H}(b)) \subset \Mult(\overline{a}).
\end{equation}
Moreover, for any $a \in H^{\infty} \setminus \{0\}$ with $\|a\|_{\infty} \leq 1$ (not necessarily non-extreme) we have 
$$
\MM(\overline{a}) = \MM(\overline{a/2})
$$
and, more importantly, $a/2 \in \mathscr{B}$. This all yields 
\begin{equation}\label{rye89wiofdp}
\bigcap_{a \in H^{\infty} \setminus \{0\}} \MM(\overline{a}) = \bigcap_{a \in \mathscr{B}} \MM(\overline{a}).
\end{equation}
Our preliminary comment is that $\MM(\overline{a})$ always contains the constant functions and thus
\begin{equation}\label{7765432}
\Mult(\overline{a}) \subset \MM(\overline{a}).
\end{equation}
Putting this all together we have 
\begin{align*}
\mathscr{F} & = \bigcap_{b \in \mathscr{B}} \Mult(\mathscr{H}(b)) && \mbox{(by \eqref{oosdpfisdf11})}\\
&  \subset \bigcap_{a \in \mathscr{B}} \Mult(\overline{a}) && \mbox{(by \eqref{11nsdve5rtyghjfb})}\\
& \subset \bigcap_{a \in \mathscr{B}} \MM(\overline{a}) && \mbox{(by \eqref{7765432})}\\
& \subset \bigcap_{a \in H^{\infty} \setminus \{0\}} \MM(\overline{a}) && \mbox{(by \eqref{rye89wiofdp})}\\
& = \mathscr{F} && \mbox{(by \eqref{pptttfffffzzz2})}.
\end{align*}
This completes the proof. 
\end{proof}

\section{The commutant and the norm of the shift on $\MM(\overline{a})$}

We know that the identity function $\phi(z)  = z$ belongs to $\Mult(\overline{a})$. 
This means that if $S f = z f$ is the standard unilateral shift on $H^2$, then the operator 
$$S_{\overline{a}} := S|_{\MM(\overline{a})}$$ is a well defined bounded operator on $\MM(\overline{a})$. The next result, which is quite standard for the shift operator on many Hilbert spaces of analytic functions, computes the commutant  of $S_{\overline{a}}$. If 
$\mathscr{B}(\MM(\overline{a}))$ denotes the  bounded operators on $\MM(\overline{a})$, the commutant $\{S_{\overline{a}}\}'$ is defined to be 
$$\{S_{\overline{a}}\}' := \{A \in \mathscr{B}(\MM(\overline{a})): A S_{\overline{a}} = S_{\overline{a}} A\}.$$

\begin{Proposition}
For $a \in H^{\infty}$ and outer,  
$$\{S_{\overline{a}}\}' = \{M_{\phi}: \phi \in \Mult(\overline{a})\},$$
where $M_{\phi}$ is the multiplication operator $M_{\phi} f = \phi f$ on $\MM(\overline{a})$. 
\end{Proposition}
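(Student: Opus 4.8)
The plan is to prove the two inclusions separately, with the $\supseteq$ direction being routine and the $\subseteq$ direction resting on the cyclicity of the constant function $1$ for $S_{\overline{a}}$.

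For the inclusion $\supseteq$: let $\phi \in \Mult(\overline{a})$. By the closed graph theorem the multiplication operator $M_{\phi}$ is bounded on $\MM(\overline{a})$ (this boundedness is exactly the reproducing kernel Hilbert space fact recalled at the beginning of Section 3). Since multiplication by $z$ and multiplication by $\phi$ commute pointwise, for every $f \in \MM(\overline{a})$ we have $M_{\phi} S_{\overline{a}} f = \phi z f = z \phi f = S_{\overline{a}} M_{\phi} f$, so $M_{\phi} \in \{S_{\overline{a}}\}'$.

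For the inclusion $\subseteq$: let $A \in \{S_{\overline{a}}\}'$ and set $\phi := A 1$. Since $1 \in \MM(\overline{a})$ and $A$ maps $\MM(\overline{a})$ boundedly into itself, $\phi \in \MM(\overline{a}) \subset \mathscr{O}(\D)$, so $\phi$ is analytic on $\D$. The commutation relation computes $A$ on every monomial: $A z^n = A S_{\overline{a}}^n 1 = S_{\overline{a}}^n A 1 = z^n \phi$, and hence $A p = \phi p$ for every polynomial $p$ by linearity. To upgrade this from polynomials to all of $\MM(\overline{a})$, I would use that norm convergence in the reproducing kernel Hilbert space $\MM(\overline{a})$ forces pointwise convergence, via $|g(\lambda)| = |\langle g, k_{\lambda}^{\overline{a}}\rangle_{\overline{a}}| \leq \|g\|_{\overline{a}} \|k_{\lambda}^{\overline{a}}\|_{\overline{a}}$. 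Thus, given $f \in \MM(\overline{a})$ and polynomials $p_k \to f$ in $\MM(\overline{a})$, continuity of $A$ gives $A p_k \to A f$, while $(A p_k)(\lambda) = \phi(\lambda) p_k(\lambda) \to \phi(\lambda) f(\lambda)$ for each $\lambda$; comparing the pointwise limits yields $(Af)(\lambda) = \phi(\lambda) f(\lambda)$, that is $A f = \phi f$. Therefore $A = M_{\phi}$, and since $A$ maps $\MM(\overline{a})$ into itself we get $\phi \in \Mult(\overline{a})$.

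The crux of the argument, and the step I expect to be the main obstacle, is the claim used above that polynomials are dense in $\MM(\overline{a})$ (equivalently, that $1$ is a cyclic vector for $S_{\overline{a}}$). This is immediate for $a \in \mathscr{A}$ from the decomposition \eqref{4938oryehdfgfe} together with the norm equivalence \eqref{xxx22625qwrlkncv<<}, since then $a q + r$ runs over a dense subset as $q$ ranges over the polynomials (dense in $H^2$) and $r$ over $\P_{N-1}$. For general outer $a \in H^{\infty}$ I would reduce to the de Branges--Rovnyak setting: replacing $a$ by $a/(2\|a\|_{\infty})$ does not change $\MM(\overline{a})$ (Proposition \ref{ppppopOO}) and realizes it as some $\mathscr{H}(b)$ with $b$ non-extreme, where density of polynomials is classical. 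One could alternatively bypass density by an eigenvector argument: $A^{*}$ commutes with $S_{\overline{a}}^{*}$, and the general identity $S_{\overline{a}}^{*} k_{\lambda}^{\overline{a}} = \overline{\lambda} k_{\lambda}^{\overline{a}}$ shows that $A^{*} k_{\lambda}^{\overline{a}}$ is again a $\overline{\lambda}$-eigenvector; if one proves that each eigenspace $\ker(S_{\overline{a}}^{*} - \overline{\lambda} I)$ is one-dimensional (the analogue of the same obstacle), then $A^{*} k_{\lambda}^{\overline{a}} = \overline{\phi(\lambda)} k_{\lambda}^{\overline{a}}$ and $A = M_{\phi}$ follows by testing against the dense family of reproducing kernels.
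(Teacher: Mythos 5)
Your proof is correct and follows essentially the same route as the paper's: the inclusion $\supseteq$ is routine, and for $\subseteq$ one sets $\phi = A1$, derives $Ap = \phi p$ on polynomials from the commutation relation, and passes to all of $\MM(\overline{a})$ using density of the polynomials together with continuity of point evaluations in the reproducing kernel Hilbert space $\MM(\overline{a})$. The one ingredient you rightly flag as the crux --- density of polynomials in $\MM(\overline{a})$ for general outer $a$ --- is exactly what the paper handles by citing \cite[Theorem 17.4]{MR3617311}.
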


\begin{proof}
Clearly we have $\supseteq$. To prove the other containment, let 
 $A \in \mathscr{B}(\MM(\overline{a}))$ with $A S_{\overline{a}} = S_{\overline{a}} A$. This implies that for any polynomial $p$
 $$A (p(S_{\overline{a}}) 1) = p(S_{\overline{a}}) A(1),$$ 
 equivalently, $A(p) = p A(1)$. Since the polynomials are dense in $\MM(\overline{a})$ (see \cite[Theorem 17.4]{MR3617311}), for a given $f \in \MM(\overline{a})$ we can find a sequence of polynomials $\{p_n\}_{n \geqslant 1}$ such that $p_{n} \to f$ in the norm of $\MM(\overline{a})$. Since point evaluations on $\D$ are continuous in the norm of $\MM(\overline{a})$ (indeed $\MM(\overline{a})$ is a reproducing kernel Hilbert space), we see that 
 $p_n \to f$ pointwise on $\D$. 
 Since $A p_n \to A f$ both in norm as well as pointwise on $\D$, we see that 
 $A f = A(1) f$. Thus $A(1)$ is a multiplier of $\MM(\overline{a})$ and $A  = M_{A(1)}$. 
\end{proof}

\begin{Remark}
 The above fact is quite standard for many spaces of analytic functions (see also \cite[Theorem 9.16]{FM} for a broader setting).
\end{Remark}

Making adjustments to the above proof, we can show that 
$$\{A \in \mathscr{B}(\MM(\overline{a_1}), \MM(\overline{a_2})): A S_{\overline{a_1}} = S_{\overline{a_2}} A\} = \{M_{\phi}: \phi \in \Mult(\overline{a_1}, \overline{a_2})\}.$$
In the above, $M_{\phi}: \MM(\overline{a_1}) \to \MM(\overline{a_2})$, $M_{\phi} f = \phi f$. One can also show that this set is (operator) norm closed. 

In the rest of this section we prove Theorem \ref{10w74hs-}.  For $f \in \mathscr{O}(\D)$ let 
$$B f = \frac{f - f(0)}{z}$$ denote the backward shift of $f$. It is well known that 
$$T_{\overline{z}} f = Bf, \quad f \in H^2,$$ and that $B$ acts contractively on $H^2$. Furthermore, since 
$T_{\overline{z}} T_{\overline{a}} = T_{\overline{a}} T_{\overline{z}}$ we conclude that 
$B \MM(\overline{a}) \subset \MM(\overline{a})$. This allows us to define 
$$X_{\overline{a}} := B|_{\MM(\overline{a})}.$$ Observe that $X_{\overline{a}}$ is also a contraction since for any $f = T_{\overline{a}} g \in \MM(\overline{a})$ we have 
$$\|X_{\overline{a}} f\|_{\overline{a}} = \|T_{\overline{a}} T_{\overline{z}} g\|_{\overline{a}} = \|T_{\overline{z}} g\|_{H^2} \leq \|g\|_{H^2} = \|f\|_{\overline{a}}.$$ 


\begin{Proposition}\label{Thm:adjoint-Xbara}
Let $a$ be a bounded outer function. Then
$$
X_{\overline a}^*=S_{\overline{a}}+1\otimes_{\overline{a}}T_{\overline{a}}Ba.
$$
\end{Proposition}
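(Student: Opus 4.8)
The plan is to compute $X_{\overline a}^{*}$ directly from the defining adjoint relation, exploiting the isometric identification $T_{\overline a}\colon H^2 \to \MM(\overline a)$ furnished by the range norm \eqref{Tzznorm}, i.e. $\langle T_{\overline a}u, T_{\overline a}v\rangle_{\overline a} = \langle u, v\rangle_{H^2}$. Throughout I write $f = T_{\overline a}u$ and $g = T_{\overline a}v$ with $u, v \in H^2$, and recall that $1\otimes_{\overline a}w$ denotes the rank-one operator $f\mapsto \langle f, w\rangle_{\overline a}\,1$ (here $w = T_{\overline a}Ba \in \MM(\overline a)$ since $a \in H^{\infty}\subset H^2$, so $Ba \in H^2$).

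First I would unravel the left-hand side. Since $B = T_{\overline z}$ commutes with $T_{\overline a}$ we have $X_{\overline a}f = T_{\overline a}(Bu)$, and hence, using the isometry together with the fact that $B^{*} = S$ on $H^2$,
$$\langle X_{\overline a}f, g\rangle_{\overline a} = \langle Bu, v\rangle_{H^2} = \langle u, Sv\rangle_{H^2} = \langle T_{\overline a}u, T_{\overline a}(Sv)\rangle_{\overline a} = \langle f, T_{\overline a}(zv)\rangle_{\overline a}.$$
As $f$ ranges over all of $\MM(\overline a)$, this identifies $X_{\overline a}^{*} g = T_{\overline a}(zv) = P(\overline a\, z v)$, where $P$ is the Riesz projection of $L^2$ onto $H^2$.

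The remaining, and least automatic, step is to peel off $S_{\overline a}g$ from this expression. Since $S_{\overline a}g = z g = z\,P(\overline a v)$, the defect is $X_{\overline a}^{*} g - S_{\overline a}g = P(z\,\overline a v) - z\,P(\overline a v)$. Here I would invoke the elementary commutator identity $P(zh) = z\,Ph + \widehat h(-1)$, valid for every $h\in L^2$ and read off directly from Fourier coefficients, applied to $h = \overline a v$; this shows the defect is the constant $\widehat{\overline a v}(-1)\cdot 1$. The final bookkeeping is to recognise this scalar as a range-norm inner product: a short Fourier-coefficient computation gives $\widehat{\overline a v}(-1) = \langle v, Ba\rangle_{H^2}$, and the isometry then converts this to $\langle v, Ba\rangle_{H^2} = \langle T_{\overline a}v, T_{\overline a}Ba\rangle_{\overline a} = \langle g, T_{\overline a}Ba\rangle_{\overline a}$. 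Combining the pieces yields $X_{\overline a}^{*} g = S_{\overline a}g + \langle g, T_{\overline a}Ba\rangle_{\overline a}\,1 = \bigl(S_{\overline a} + 1\otimes_{\overline a}T_{\overline a}Ba\bigr)g$, as claimed. The one genuine subtlety is precisely this constant correction: the forward shift $S_{\overline a}$ does not intertwine with $T_{\overline a}$ as cleanly as $B$ does, and capturing the scalar defect $\widehat{\overline a v}(-1)$ and re-expressing it through $Ba$ is where the content of the statement lies; I would double-check the two Fourier-coefficient identities with care, since a sign or index slip there is the most likely source of error.
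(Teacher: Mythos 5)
Your argument is correct, and it reaches the formula by a genuinely different route than the paper. The paper computes $(X_{\overline a}^{*}f)(\lambda)$ pointwise by pairing against the reproducing kernels $k_\lambda^{\overline a}=T_{\overline a}(ak_\lambda)$, reducing to $\langle \overline a g,\overline z k_\lambda\rangle_{L^2}$ and then splitting this inner product with $P_{+}+P_{-}=\operatorname{Id}$, where the explicit computation $P_{-}(\overline z k_\lambda)=\overline z$ produces the scalar correction. You instead make explicit the structural fact that $T_{\overline a}\colon H^2\to\MM(\overline a)$ is a unitary intertwining $B$ with $X_{\overline a}$, so that $X_{\overline a}^{*}g=T_{\overline a}(zv)$ when $g=T_{\overline a}v$, and the whole statement collapses to the elementary commutator identity $P(zh)-zPh=\widehat h(-1)$ applied to $h=\overline a v$. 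The two proofs isolate the same scalar defect, namely the $(-1)$st Fourier coefficient of $\overline a$ times the $H^2$ preimage, and both convert it to $\langle\,\cdot\,,T_{\overline a}Ba\rangle_{\overline a}$ by the same Fourier computation; all of your individual identities check out, including $\widehat{\overline a v}(-1)=\langle v,Ba\rangle_{H^2}$ and the direction of $B^{*}=S$. What your version buys is conceptual transparency: it exhibits $X_{\overline a}^{*}$ as the conjugate $T_{\overline a}ST_{\overline a}^{-1}$ of the shift, so the rank-one term is visibly the failure of $T_{\overline a}$ to commute with $S$ (as opposed to $B$, with which it does commute). What the paper's version buys is a direct pointwise formula for $(X_{\overline a}^{*}f)(\lambda)$ using only the reproducing-kernel machinery already set up in \eqref{1snsdrgnnzz9}, which keeps the proof self-contained within the toolkit used elsewhere in the paper. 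Either way the identification of the adjoint from the relation $\langle X_{\overline a}f,g\rangle_{\overline a}=\langle f,T_{\overline a}(zv)\rangle_{\overline a}$ for all $f$ is legitimate, so there is no gap.
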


\begin{proof}
For $f=T_{\overline a}g\in\MM(\overline a)$ and  $\lambda\in\D$, we have
\[
(X_{\overline a}^* f)(\lambda)=\langle X_{\overline a}^*f,k_{\lambda}^{\overline a}\rangle_{\overline a}=\langle f,X_{\overline a}k_\lambda^{\overline a}\rangle_{\overline a}.
\]
Let $P_{+}$ denote the orthogonal projection of $L^2$ onto $H^2$ and $P_{-} = \operatorname{Id} - P_{+}$. 
Using the definition of $X_{\overline{a}}$ along with the identities 
$$k_{\lambda}^{\overline{a}}=T_{\overline{a}}a
k_{\lambda}=T_{|a|^2}k_{\lambda},$$
and $T_{\overline{a}} T_{\overline{z}} = T_{\overline{z}} T_{\overline{a}}$, we obtain 
$$X_{\overline a}k_\lambda^{\overline a}=BT_{\overline a}(ak_\lambda)=T_{\overline a}B(ak_\lambda).$$ 
From here we get 
\begin{align*}
(X_{\overline a}^* f)(\lambda) &=\langle f,T_{\overline{a}} B(a k_{\lambda})\rangle_{\overline a}\\
&=\langle g,B(ak_\lambda)\rangle_{L^2}\\
&=\langle g,\overline zak_\lambda \rangle_{L^2}\\
&=\langle \overline ag,\overline z k_\lambda\rangle_{L^2}\\
&=\langle P_+(\overline ag),\overline z k_\lambda\rangle_{L^2}+\langle \overline ag,P_-(\overline z k_\lambda)\rangle_{L^2}\\
&=\langle zf,k_\lambda \rangle_{L^2}+\langle \overline ag,P_-(\overline z k_\lambda)\rangle_{L^2}\\
&=\lambda f(\lambda)+\langle \overline ag,P_-(\overline z k_\lambda)\rangle_{L^2}.
\end{align*}
A short computation with power series shows that $P_-(\overline zk_\lambda)=\overline z$, whence 
\[
(X_{\overline a}^* f)(\lambda)=\lambda f(\lambda)+\langle \overline ag,\overline z\rangle_{L^2}.
\]
Observe now 
\begin{align*}
\langle \overline ag,\overline z\rangle_{L^2}&= \langle g,P_{+}(a\bar z)\rangle_{L^2}\\
&=\langle g,Ba\rangle_{L^2}\\
&=\langle T_{\overline{a}}g,T_{\overline{a}}Ba\rangle_{\overline{a}}\\
&=\langle f,T_{\overline{a}}Ba\rangle_{\overline{a}}\\
&=(1\otimes_{\overline{a}} T_{\overline{a}}Ba)f.
\end{align*}
Hence $X_{\overline a}^* f=S_{\overline{a}}f+(1\otimes T_{\overline{a}}Ba)f$, which yields the result.
\end{proof}

%
%
%

\begin{proof}[Proof of Theorem \ref{10w74hs-}]
First, observe from Proposition~\ref{Thm:adjoint-Xbara} that $S_{\overline{a}}=X_{\overline{a}}^*-1\otimes_{\overline{a}}T_{\overline{a}}Ba$ and so $S_{\overline{a}}^*=X_{\overline{a}}-T_{\overline{a}}Ba\otimes_{\overline{a}}1$. Thus for every $f\in\MM(\overline{a})$, we have
\begin{align*}
S_{\overline{a}}^{*} S_{\overline{a}} f & =X_{\overline{a}}S_{\overline{a}}f-\langle S_{\overline{a}}f,1 \rangle_{\overline{a}}T_{\overline{a}}Ba
=f-\langle S_{\overline{a}}f,1 \rangle_{\overline{a}}T_{\overline{a}}Ba.
\end{align*}
Next we see that 
\begin{align*}
\|S_{\overline{a}} f\|_{\overline{a}}^{2} & = \langle S_{\overline{a}}^{*} S_{\overline{a}}f,f \rangle_{\overline{a}}\\
&=\|f\|_{\overline{a}}^2-\langle S_{\overline{a}}f,1 \rangle_{\overline{a}} \langle T_{\overline{a}}Ba,f \rangle_{\overline{a}}\\
&=\|f\|_{\overline{a}}^2-\langle f,S_{\overline{a}}^*1 \rangle_{\overline{a}} \langle T_{\overline{a}}Ba,f \rangle_{\overline{a}}
\end{align*}
But $S_{\overline{a}}^*1=X_{\overline{a}}1-\|1\|_{\overline{a}}^2T_{\overline{a}}Ba=-\|1\|_{\overline{a}}^2T_{\overline{a}}Ba$, which yields
\begin{align*}
\|S_{\overline{a}} f\|_{\overline{a}}^{2}&=\|f\|^2_{\overline{a}}+\|1\|_{\overline{a}}^2\,|\langle f,T_{\overline{a}}Ba \rangle|^2\\
&\leq (1 + \|1\|_{\overline{a}}^2 \|T_{\overline{a}} B a\|_{\overline{a}}^{2}) \|f\|_{\overline{a}}^{2}.
\end{align*}
This proves the upper bound 
$$\|S_{\overline{a}}\|_{\overline{a}}^{2} \leq 1 + \|1\|_{\overline{a}}^2 \|T_{\overline{a}} B a\|_{\overline{a}}^{2}.$$
To obtain equality, observe that the previous computation shows that 
$$\|S_{\overline{a}} f\|_{\overline{a}}^{2} = \|f\|_{\overline{a}}^{2} + \|1\|_{\overline{a}}^{2} \left| \langle f, T_{\overline{a}} B a\rangle_{\overline{a}}\right|^2.$$
Applying this identity to $f = T_{\overline{a}} B a$ we get 
\begin{align*}
\|S_{\overline{a}} (T_{\overline{a}} B a)\|_{\overline{a}}^{2} & = \|T_{\overline{a}} B a\|_{\overline{a}}^{2} + \|1\|_{\overline{a}}^{2} \|T_{\overline{a}} B a\|_{\overline{a}}^{4}\\
& = \|T_{\overline{a}} B a\|_{\overline{a}}^{2} (1 + \|1\|_{\overline{a}}^{2} \|T_{\overline{a}} B a\|_{\overline{a}}^{2})
\end{align*}
and thus 
$$\|S_{\overline{a}}\|^{2} = 1 + \|1\|_{\overline{a}}^{2} \|T_{\overline{a}} B a\|_{\overline{a}}^{2}.$$
Also observe that 
$$\|1\|_{\overline{a}}^{2} = \Big\|T_{\overline{a}} \frac{1}{\overline{a(0)}}\Big\|_{\overline{a}}^{2} = \Big\|\frac{1}{\overline{a(0)}}\Big\|_{H^2}^{2} = \frac{1}{|a(0)|^2}$$ and 
\begin{align*}
\|T_{\overline{a}} B a\|_{\overline{a}}^{2}  & = \|B a\|_{H^2}^{2}
 = \langle S S^{*} a, a\rangle_{H^2}\\
& = \langle a - a(0), a\rangle_{H^2}\\
& = \|a\|_{H^2}^{2} - |a(0)|^2,
\end{align*}
to conclude 
\begin{align*}
\|S_{\overline{a}}\|^{2} & = 1 + \|1\|_{\overline{a}}^{2} \|T_{\overline{a}} B a\|_{\overline{a}}^{2}\\
& = 1 + \frac{1}{|a(0)|^2} (\|a\|_{H^2}^{2} - |a(0)|^2)
 = \frac{\|a\|_{H^2}^{2}}{|a(0)|^2}. \qedhere
\end{align*}
\end{proof}

\bibliographystyle{plain}

\bibliography{references}

\begin{thebibliography}{10}

\bibitem{Bottcher}
Albrecht B{\"o}ttcher and Bernd Silbermann.
\newblock {\em Analysis of {T}oeplitz operators}.
\newblock Springer Monographs in Mathematics. Springer-Verlag, Berlin, second
  edition, 2006.
\newblock Prepared jointly with Alexei Karlovich.

\bibitem{Camara-Part}
M.~C Camara and Jonathan Partington.
\newblock Multipliers and equivalences between {T}oeplitz kernels.
\newblock preprint.

\bibitem{Crofoot}
R.~Bruce Crofoot.
\newblock Multipliers between invariant subspaces of the backward shift.
\newblock {\em Pacific J. Math.}, 166(2):225--246, 1994.

\bibitem{MR1098860}
B.~Mark Davis and John~E. McCarthy.
\newblock Multipliers of de {B}ranges spaces.
\newblock {\em Michigan Math. J.}, 38(2):225--240, 1991.

\bibitem{Duren}
P.~L. Duren.
\newblock {\em Theory of ${H}\sp{p}$ spaces}.
\newblock Academic Press, New York, 1970.

\bibitem{FHR-Ma}
Emmanuel Fricain, Andreas Hartmann, and William Ross.
\newblock Range spaces of co-analytic {T}oeplitz operators.
\newblock to appear, Canadian J. Math.

\bibitem{FHR-Mult-Model}
Emmanuel Fricain, Andreas Hartmann, and William~T. Ross.
\newblock Multipliers between model spaces.
\newblock {\em Studia Math.}, 240(2):177--191, 2018.

\bibitem{FM}
Emmanuel Fricain and Javad Mashreghi.
\newblock {\em The theory of {$\mathcal{H}(b)$} spaces. {V}ol. 1}, volume~20 of
  {\em New Mathematical Monographs}.
\newblock Cambridge University Press, Cambridge, 2016.

\bibitem{MR3617311}
Emmanuel Fricain and Javad Mashreghi.
\newblock {\em The theory of {$\mathcal{H}(b)$} spaces. {V}ol. 2}, volume~21 of
  {\em New Mathematical Monographs}.
\newblock Cambridge University Press, Cambridge, 2016.

\bibitem{MR1254125}
B.~A. Lotto and Donald Sarason.
\newblock Multipliers of de {B}ranges-{R}ovnyak spaces.
\newblock {\em Indiana Univ. Math. J.}, 42(3):907--920, 1993.

\bibitem{MR1614726}
Benjamin~A. Lotto and Donald Sarason.
\newblock Multipliers of de {B}ranges-{R}ovnyak spaces. {II}.
\newblock In {\em Harmonic analysis and hypergroups ({D}elhi, 1995)}, Trends
  Math., pages 51--58. Birkh\"auser Boston, Boston, MA, 1998.

\bibitem{MR1065054}
John~E. McCarthy.
\newblock Common range of co-analytic {T}oeplitz operators.
\newblock {\em J. Amer. Math. Soc.}, 3(4):793--799, 1990.

\bibitem{MR1157815}
Walter Rudin.
\newblock {\em Functional analysis}.
\newblock International Series in Pure and Applied Mathematics. McGraw-Hill,
  Inc., New York, second edition, 1991.

\bibitem{Sa}
D.~Sarason.
\newblock {\em Sub-{H}ardy {H}ilbert spaces in the unit disk}.
\newblock University of Arkansas Lecture Notes in the Mathematical Sciences,
  10. John Wiley \& Sons Inc., New York, 1994.
\newblock A Wiley-Interscience Publication.

\bibitem{MR847333}
Donald Sarason.
\newblock Doubly shift-invariant spaces in {$H^2$}.
\newblock {\em J. Operator Theory}, 16(1):75--97, 1986.

\end{thebibliography}

\end{document}